\newtheorem{ours}{Theorem}
\newtheorem{theorem}{Theorem}
\newtheorem{lemma}[theorem]{Lemma}
\newtheorem{prop}[theorem]{Proposition}
\newtheorem{problem}{Problem}
\def\QED{\ensuremath{{\square}}}
\def\markatright#1{\leavevmode\unskip\nobreak\quad\hspace*{\fill}{#1}}
\newenvironment{proof}
  {\begin{trivlist}\item[\hskip\labelsep{\bf Proof.}]}
  {\markatright{\QED}\end{trivlist}}
\newcommand{\diam}{\operatorname{diam}}
\newcommand{\mindist}{\operatorname{mindist}}
\title{Point Sets with Small Integer Coordinates and no Large Convex
Polygons}
\author{
  Frank Duque \thanks{Departamento de Matem\'aticas, CINVESTAV. } \footnotemark[3]\and
 Ruy Fabila-Monroy \footnotemark[1] \thanks{ruyfabila@math.cinvestav.edu.mx} \and
 Carlos Hidalgo-Toscano \footnotemark[1] \thanks{[frduque, cmhidalgo]@math.cinvestav.mx }}
\begin{document}
%\linenumbers
\maketitle

\begin{abstract}
In 1935, Erd\H{o}s and Szekeres proved that every set of $n$ points in general
position in the plane contains the vertices of a convex polygon
of $\frac{1}{2}\log_2(n)$ vertices. In 1961, they constructed, for every positive
integer $t$, a set of $n:=2^{t-2}$ points in general position in the plane, such that
every convex polygon with vertices in this set has at most $\log_2(n)+1$ vertices. 
In this paper we show how to realize their construction in
an integer grid of size $O(n^2 \log_2(n)^3)$.
\end{abstract}

\section{Introduction}

A set of points in the plane is in \emph{general position} if no three of its points
are collinear.  Let $S$ be a set of $n$ points in general position in the plane.
A \emph{convex $k$-gon of $S$} is a convex polygon of $k$ vertices whose vertices are points of $S$.
Erd\H{o}s and Szekeres~\cite{happyend} proved that every set of $\binom{2k-4}{k-2}+1$ points in general
position in the plane contains a convex $k$-gon. Using Stirling's approximation this bound can be rephrased as follows.
\begin{theorem}\textbf{(Erd\H{o}s and Szekeres~1935)}

 Every set of $n$ points in general position in the plane has a convex $k$-gon
 of at least $\frac{1}{2}\log_2 (n)$ vertices.
\end{theorem}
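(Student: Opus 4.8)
The plan is to bypass the original Ramsey-type argument and instead prove the sharper \emph{cups and caps} statement, from which the classical bound $\binom{2k-4}{k-2}+1$ — and hence the $\tfrac12\log_2 n$ estimate in the theorem — follows by a short computation. After a generic rotation we may assume that all $n$ points have distinct $x$-coordinates while remaining in general position, so that every subset, read from left to right, is a sequence of points with strictly increasing abscissae. Call such a sequence a \emph{$k$-cup} if it has $k$ points and the slopes of its consecutive edges strictly increase, and a \emph{$k$-cap} if it has $k$ points and these slopes strictly decrease (strictness is automatic, since equal consecutive slopes would force three collinear points). The key observation is that a $k$-cup and a $k$-cap are each in convex position, so producing either one already yields a convex $k$-gon.

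I would then let $f(k,\ell)$ be the least $N$ such that any $N$ points in general position with distinct $x$-coordinates contain a $k$-cup or an $\ell$-cap, record the trivial base values $f(2,\ell)=f(k,2)=2$, and prove the recurrence
\[
f(k,\ell)\le f(k-1,\ell)+f(k,\ell-1)-1 .
\]
To establish it, set $N=f(k-1,\ell)+f(k,\ell-1)-1$, take any such set $P$, and suppose for contradiction that $P$ has neither a $k$-cup nor an $\ell$-cap. Let $M$ be the set of points of $P$ that occur as the right endpoint of some $(k-1)$-cup. If $|P\setminus M|\ge f(k-1,\ell)$, then $P\setminus M$ contains either an $\ell$-cap (impossible) or a $(k-1)$-cup, whose right endpoint would then lie in $M$, a contradiction; hence $|M|\ge f(k,\ell-1)$, so $M$ contains an $(\ell-1)$-cap $c_1,\dots,c_{\ell-1}$. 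Its left endpoint $c_1\in M$ is, by definition of $M$, the right endpoint of some $(k-1)$-cup.

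The decisive step is to glue these two chains at $c_1$, and I expect this to be the only genuinely delicate point. Comparing the slope $\sigma$ of the cup edge entering $c_1$ with the slope $\tau$ of the cap edge leaving $c_1$: if $\sigma\le\tau$, then appending $c_2$ to the $(k-1)$-cup keeps the edge slopes increasing and produces a $k$-cup; if $\sigma>\tau$, then prepending the cup's penultimate vertex (which lies to the left of $c_1$) to the $(\ell-1)$-cap keeps the slopes decreasing and produces an $\ell$-cap. Either way we contradict the choice of $P$, so the recurrence holds. The main obstacles are keeping the left/right endpoint bookkeeping consistent and verifying that the glued sequence really has the required monotone slopes; everything else is routine.

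Finally I would solve the recurrence. Pascal's identity together with the base values and the recurrence gives, by induction on $k+\ell$, that $f(k,\ell)\le\binom{k+\ell-4}{k-2}+1$; taking $k=\ell$ shows that any $\binom{2k-4}{k-2}+1$ points contain a $k$-cup or a $k$-cap, i.e.\ a convex $k$-gon, which is exactly the Erd\H{o}s--Szekeres bound quoted above. To extract the logarithmic estimate I would use the crude inequality $\binom{2k-4}{k-2}\le 2^{2k-4}$ (Stirling's formula in fact saves a further factor of order $\sqrt{k}$): choosing $k=\lceil\tfrac12\log_2 n\rceil$ gives $2k-4\le\log_2 n-2$, hence $\binom{2k-4}{k-2}+1\le 2^{2k-4}+1\le n/4+1\le n$ for all $n\ge 2$. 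Thus a convex $k$-gon with $k\ge\tfrac12\log_2 n$ is guaranteed, which completes the proof.
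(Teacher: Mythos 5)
Your proposal is correct, and you should know that the paper itself offers no proof of this statement at all: Theorem 1 is quoted as background from the 1935 Erd\H{o}s--Szekeres paper, with the only ``work'' being the observation that the bound $n(k)\le\binom{2k-4}{k-2}+1$ rephrases, via Stirling, as a guaranteed convex $\frac{1}{2}\log_2(n)$-gon. What you supply is the classical cups--caps argument --- which is in fact one of Erd\H{o}s and Szekeres's own two original proofs, not a departure from it --- and every step checks out: the recurrence $f(k,\ell)\le f(k-1,\ell)+f(k,\ell-1)-1$ via the set $M$ of right endpoints of $(k-1)$-cups, the gluing at $c_1$ with the slope comparison $\sigma$ versus $\tau$ (where equality is indeed excluded by general position, so your ``$\sigma\le\tau$'' case is harmless), the solution $f(k,\ell)\le\binom{k+\ell-4}{k-2}+1$ by Pascal's identity, and the final calculation with $k=\left\lceil\frac{1}{2}\log_2 n\right\rceil$ giving $2k-4\le\log_2 n-2$ and hence $\binom{2k-4}{k-2}+1\le n/4+1\le n$. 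It is also a fitting choice here, since the cup/cap machinery you develop is exactly the vocabulary the paper itself relies on in Section 2.1 for the sets $S_{k,l}$ (which realize the matching lower bound $\binom{k+l-4}{k-2}$ for your $f$, showing your recurrence is tight).
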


More than quarter of a century afterwards, Erd\H{o}s and Szekeres~\cite{erdos_cons} provided a construction of a set of points
in general position such that every convex $k$-gon of this set has logarithmic size.
Specifically, they showed the following.
\begin{theorem}\textbf{(Erd\H{o}s and Szekeres~1961)}

 For every integer $t\ge 2$, there exists a set of $n:=2^{t-2}$ points in general position in the plane
 such that every $k$-gon of this set has at most $t-1=\log_2(n)+1$ vertices.
\end{theorem}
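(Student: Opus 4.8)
The plan is to induct on $t$ and to control convex polygons through their \emph{caps} and \emph{cups}. Writing the points of a subset in order of $x$-coordinate, call the subset a cap if the slopes of consecutive edges decrease (a concave chain) and a cup if they increase (a convex chain). The leftmost and rightmost vertices of any convex polygon cut its boundary into an upper cap and a lower cup sharing exactly those two endpoints, so a convex $m$-gon is precisely a cap of some length $p$ glued to a cup of some length $q$ with $p+q-2=m$. Conversely a $p$-cap or $q$-cup is itself in convex position, hence already a convex polygon; so a set with no convex $t$-gon automatically has no $t$-cap and no $t$-cup, and the entire difficulty is to stop a long cap and a long cup from sharing both endpoints.

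I would build the set recursively, doubling its size at each step so that $|X_t|=2|X_{t-1}|=2\cdot 2^{t-3}=2^{t-2}$, with $X_2$ a single point. The natural attempt is to take two affine copies $L$ and $R$ of $X_{t-1}$ and place $R$ far to the lower right of $L$, with every edge joining $L$ to $R$ much steeper than every internal edge (equivalently, $R$ below all lines spanned by $L$, and $L$ above all lines spanned by $R$). The key lemma is that no convex polygon of $X_t$ has more than $t-1$ vertices. Polygons lying inside $L$ or inside $R$ are handled by the induction hypothesis; for a polygon $Q$ meeting both copies, the steepness and position conditions are designed to force every vertex of $Q\cap L$ onto the upper chain and every vertex of $Q\cap R$ onto the lower chain, so that $Q$ is exactly a cap of $L$ glued to a cup of $R$.

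The main obstacle is that this last quantity is a priori as large as $(t-2)+(t-2)=2t-4$, so the bare hypothesis ``no convex $(t-1)$-gon'' is too weak: a nearly maximal cap of $L$ could still meet a nearly maximal cup of $R$. I therefore expect the heart of the proof to be a strengthened invariant that bounds the longest cap and the longest cup \emph{simultaneously} --- a pair $(a,b)$ with $a+b$ essentially fixed along the diagonal $a+b=t$ --- together with a merge that lengthens only one of the two parameters at a time, so that the copy supplying the cap carries only short cups and the copy supplying the cup carries only short caps. Proving that this refined invariant survives the merge, that the spanning bound then collapses from $2t-4$ to $t-1$, and that the admissible parameter choices still total $\sum_{i}\binom{t-2}{i}=2^{t-2}$ points, is the step I expect to demand the most care; the reduction of the first paragraph and the cardinality count are by comparison routine.
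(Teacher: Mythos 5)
Your plan is essentially the paper's proof: the strengthened invariant you posit is exactly the classical two-parameter family $S_{k,l}$ (no $k$-cup and no $l$-cap, $\binom{k+l-4}{k-2}$ points, built by placing a translate of $S_{k,l-1}$ high above $S_{k-1,l}$ so that each merge lengthens only one of the two parameters), and the final set is not a doubling of one set but the union of the diagonal pieces $S_{t-i,i+2}$ for $0 \le i \le t-2$, placed in tiny squares along a right-turning chain, which gives your count $\sum_{i}\binom{t-2}{i}=2^{t-2}$ and collapses the spanning bound via (cap in the leftmost occupied piece) $+$ (cup in the rightmost occupied piece) $+$ (at most one vertex from each intermediate piece) $\le (s+1)+(t-r-1)+(r-s-1)=t-1$. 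One slip to correct when you execute the plan: the pairing is crossed in your write-up --- the piece supplying the polygon's cap must be the one carrying only short \emph{caps} and the piece supplying the cup the one carrying only short \emph{cups} (as stated, a long cap could still meet a long cup and the bound would stay near $2t-4$), and this correct pairing is precisely what the diagonal parameters hand to the leftmost and rightmost occupied pieces.
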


Some inaccuracies in the construction of Erd\H{o}s and Szekeres were corrected by Kalbfleisch and
Stanton in~\cite{kal}.
The Erd\H{o}s-Szekeres construction as described in~\cite{kal} uses integer-valued
coordinates. The size of these coordinates grows quickly with respect to $n$. 
This has led some researchers to conjecture that the Erd\H{o}s-Szekeres construction 
cannot be carried out with small integer coordinates.

As an example here are some excerpts from the book ``Research Problems in Discrete Geometry''~\cite{research_problems}
by Brass, Moser and Pach regarding the Erd\H{o}s-Szekeres construction.
\begin{quote}``The complexity of this construction is reflected by the fact that none of the numerous papers
on the Erd\H{o}s-Szekeres convex polygon problem includes a picture of the $16$-point set without a
convex hexagon.''
\end{quote}
\begin{quote}``Kalbfleisch and Stanton~\cite{kal} gave explicit coordinates for the $2^{t-2}$ points in the
Erd\H{o}s-Szekeres construction. However, even in the case of $t=6$ the coordinates are so large that
they cannot be used for a reasonable illustration.''
\end{quote}
\begin{quote}
 ``The exponential blowup of the coordinates in the above lower bound constructions may be necessary.
 It is possible that all extremal configurations belong to the class of order types that have no small
 realizations.'' %\footnote{Actually, the order of growth of the coordinates
%of the construction in \cite{kal} is of $n^{O(\log n)}$. \ruy{Estoy casi seguro que es as\'i, pero a luis felipe
%le falto escanear una p\'agina por lo que no estoy totalmente seguro.}}
\end{quote}
Also, in the survey~\cite{survey} on the Erd\H{o}s-Szekeres problem by Morris and Soltan we find the following.
\begin{quote}
``The size of the coordinates of the points in
the configurations given by Kalbfleisch and Stanton~\cite{kal} that meet the conjectured
upper bound on $N(n)$ grows very quickly. A step toward showing that this is
unavoidable was taken by Alon et al.~\cite{restricted}.''
\end{quote}
In this paper we prove that the Erd\H{o}s-Szekeres construction can be realized
in a rather small integer grid of size. Our main result is the following.
\begin{ours}
 The Erd\H{o}s-Szekeres construction of $n=2^{t-2}$ points can be realized in an integer grid of size $O(n^2\log_2(n)^3)$.
\end{ours}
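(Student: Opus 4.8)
The plan is to realize the recursive cap--cup construction of Erd\H{o}s and Szekeres directly on the integer grid, while tracking the width and height of the bounding box as the recursion proceeds. Recall that the extremal set $X(a,b)$ having no $a$-cap and no $b$-cup is built by placing a copy $L$ of $X(a-1,b)$ to the left and a copy $R$ of $X(a,b-1)$ to the right, after a vertical shift chosen so that (via the standard gluing conditions) every point of $R$ lies strictly above every line spanned by two points of $L$, and symmetrically every point of $L$ lies strictly below every line spanned by two points of $R$. Since a convex polygon decomposes into an upper chain (a cap) and a lower chain (a cup) sharing their two extreme vertices, bounding cap and cup lengths by $\Theta(\log n)$ bounds the size of every convex polygon, and the set witnessing the $2^{t-2}$-point construction is the appropriate instance of this recursion with parameters $a,b=\Theta(\log n)$ reached at recursion depth $d=\Theta(\log n)$. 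It therefore suffices to control how the bounding box grows over these $d$ levels.

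First I would isolate the combination step as a lemma: given $L$ in a $W\times H$ box and $R$ in a $W'\times H'$ box, glue them into a box whose dimensions exceed $\max(W,W')$ and $\max(H,H')$ by only an \emph{additive}, polylogarithmic amount, rather than by a multiplicative factor. The obstruction is precisely the gluing condition: a line through two points of $L$ whose $x$-coordinates are close but whose $y$-coordinates differ can be steep, and clearing $R$ above its rightward extension then forces a huge vertical shift, which is exactly the source of the exponential blowup in the naive realization. The remedy is to carry, as an invariant of the recursion, an upper bound $\sigma_k$ on the absolute value of every slope determined by two points of the level-$k$ configuration; equivalently, to keep the points spread out horizontally so that no two are vertically far apart while horizontally close. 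With such a bound in hand, the vertical clearance needed to glue is at most $\sigma_k$ times the horizontal span, which can be kept small by choosing the horizontal gap and the relative horizontal scales of $L$ and $R$ appropriately.

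Second I would set up and solve the resulting recurrences for the width $W_k$, the height $H_k$, and the slope bound $\sigma_k$. The essential point---the one separating a polynomial bound from the quasi-polynomial or exponential bounds one obtains naively---is that the per-level cost must enter additively: placing the two sub-blocks side by side gives $W_k\le 2W_{k-1}+g_k$ and a height recurrence of the form $H_k\le 2H_{k-1}+c_k$, where the gap $g_k$ and the clearance $c_k$ are only polylogarithmic in $n$ because the caps and cups involved have length $O(\log_2 n)$. Unfolding these linear recurrences over $d=\Theta(\log_2 n)$ levels yields a bounding box of dimensions $O(n\,\mathrm{polylog}(n))\times O(n\,\mathrm{polylog}(n))$, and balancing the two polylogarithmic factors gives the stated product $O(n^2\log_2(n)^3)$ for the final grid.

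Finally I would check that the construction stays in general position and on the integer grid. General position can be secured by keeping the gluing inequalities strict and all pairwise slopes distinct (the slope invariant already tracks these quantities), and integrality by performing every scaling and shift with integer parameters and rounding once at the end, within the slack left by the strict inequalities. I expect the main obstacle to be the combination lemma itself: proving that a single gluing step can be carried out with only an additive polylogarithmic increase in \emph{both} dimensions while simultaneously preserving the cap/cup bounds, the slope invariant $\sigma_k$, and general position. Once that lemma is established, unfolding the recurrences and assembling the final grid bound is routine.
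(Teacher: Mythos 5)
Your proposal contains a genuine gap at its declared core: the combination lemma asserting that one gluing step can be done with only an \emph{additive}, polylogarithmic increase in both dimensions is almost certainly false, and nothing in your sketch addresses the obstruction. To place $R$ above every line spanned by $L$ you need a vertical shift of order $H_{k-1}+\sigma_{k-1}\cdot(\hbox{horizontal span})$; the new cross slopes (from a point of $L$ to a point of $R$) are then of order that shift divided by the horizontal gap $g_k$, so the slope invariant you want to carry updates roughly as $\sigma_k \approx \sigma_{k-1}W_k/g_k$. If $g_k$ is only polylogarithmic while $W_k \approx 2^k$, then $\sigma_k$ multiplies by nearly $W_k$ at every level and the required clearance $\sigma_{k-1}W_k$ — hence the height — blows up super-polynomially; to keep $\sigma_k$ under control you are forced to take $g_k = \Omega(W_{k-1})$, which turns your width recurrence into $W_k \ge (2+c)W_{k-1}$ for a constant $c>0$ and yields width $m^{1+\Omega(1)}$ in the number of points $m$, not $m\cdot\mathrm{polylog}(m)$. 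This tradeoff is made explicit in the paper's implementation section: setting $X_r=(2+c)^r$ forces $Y_r=\Theta\left(\left(2+\frac{3}{c}\right)^r\right)$, and balancing at $c=\sqrt{3}$ gives only $O\left(n^{\log_2(2+\sqrt{3})}\right)=O\left(n^{1.8999\ldots}\right)$ per side; the paper even conjectures that some $n^\gamma\times n^\gamma$ grid with $\gamma\in(1,2)$ is unattainable. Note also that the theorem's ``grid of size $O(n^2\log_2(n)^3)$'' refers to the side length, not the area, so your target of an $O(n\,\mathrm{polylog}(n))\times O(n\,\mathrm{polylog}(n))$ box is strictly stronger than what is claimed — and stronger than what the method can deliver. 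The paper avoids your lemma entirely: it fixes explicit shifts $\delta_r=3\cdot 4^{r-1}$, $\delta_r'=(3r+1)4^{r-1}$ to build a \emph{superset} $P_r$ of $2^r$ points containing $S_{k,l}$ for $r=k+l-1$, accepting width $4^r$ (factor $4$ per doubling, so slopes stay bounded by $r+1$), and then observes that $4^{t+1}=O(n^2)$ when measured against $n=2^{t-2}$ — the quadratic side length is the point, not an inefficiency to be optimized away.

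A second gap: the extremal set of $2^{t-2}$ points is not ``the appropriate instance of this recursion with $a,b=\Theta(\log n)$.'' A single $S_{a,b}$ with $a+b=t+2$ has only $\binom{t-2}{a-2}<2^{t-2}$ points; the Erd\H{o}s--Szekeres construction is the union $\bigcup_{i=0}^{t-2} S_{t-i,i+2}'$ of $t-1$ translated pieces placed along a concave chain, and your proposal says nothing about this assembly. In the paper it requires its own machinery: the squares $C_i$ positioned by the vectors $v_i$ inside a $3t^2\times 3t^2$ grid, the scaling by $(t+1)4^{t+1}$, the right-turn lemma for triples in distinct $D_i$'s, the sign separation (negative slopes between pieces versus nonnegative slopes within a piece), and the final count that a convex polygon picks up a cap of at most $s+1$ vertices from $D_s$, a cup of at most $t-r-1$ vertices from $D_r$, and at most one vertex from each intermediate cell, totalling $t-1$. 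Finally, your justification that the clearances $c_k$ are polylogarithmic ``because the caps and cups involved have length $O(\log_2 n)$'' is a non sequitur: cap and cup lengths are combinatorial parameters and place no bound on the geometric clearance, which is governed by slopes and widths as above.
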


This solves an open problem
of \cite{research_problems}, which we discuss, together with other problems, in Section~\ref{sec:restricted}.

To finish this section we mention the rich history behind the improvements on the upper bound on the Erd\H{o}s-Szekeres theorem.
Let $n(k)$ be the smallest integer such that every set of $n(k)$ points in general
position in the plane contains a convex $k$-gon. 

The upper bound given by Erd\H{o}s and Szekeres is of \[n(k) \le  \binom{2k-4}{k-2}+1.\]
It took 63 years for an improvement to be found, but in the course of one year many of them followed:
Chung and Graham~\cite{chung_graham} proved that $n(k) \le  \binom{2k-4}{k-2}$;
Kleitman and Pachter~\cite{kleitman_pachter} proved that $n(k) \le  \binom{2k-4}{k-2}-2k+7;$ 
T\'oth and Valtr~\cite{toth_valtr} improved this bound roughly by a factor of $2$, they showed that $n(k) \le \binom{2k-5}{k-2}+2.$
Eight years later in 2006, T\'oth and Valtr~\cite{upper_kgon} further improved this bound by $1$.

Very recently there has been a new set of improvements. Vlachos~\cite{vlachos} 
proved that \[n(k) \le \binom{2k-5}{k-2}-\binom{2k-8}{k-3}+\binom{2k-10}{k-7}+2.\]
This implies that \[ \limsup_{k \to \infty} \frac{n(k)}{\binom{2k-5}{k-2}}\le\frac{29}{32}.\]
Afterwards, Norin and Yuditsky~\cite{without_ind} improved this to   
\[\limsup_{k \to \infty} \frac{n(k)}{\binom{2k-5}{k-2}}\le \frac{7}{8}.\]
Mojarrad and Vlachos~\cite{vlachos_2} made a furthter improvement and showed that $n(k) \le \binom{2k-5}{k-2}-\binom{2k-8}{k-3}+2$.
Finally, in 2016, Suk~\cite{suk} made a huge improvement; he proved that \[n(k)\le 2^{k+o(k)}.\]

The best lower bound is the one given by the Erd\H{o}s-Szekeres construction.  It implies that 
\[n(k)\ge 2^{k-2}+1.\] This is conjectured to be value of $n(k)$.

This paper is organized as follows. In Section~\ref{sec:cons} we describe
in detail the Erd\H{o}s-Szekeres construction and show how to realize it
in a small integer grid.  Based on the description of Section~\ref{sec:cons},
we implemented an algorithm to compute the Erd\H{o}s-Szekeres construction.
In Section~\ref{sec:imp} we discuss optimizations made on our implementation
to further reduce the size of the integer coordinates; we also provide
figures  of the Erd\H{o}s-Szekeres construction for $t=6$ and $t=7$. 
In Section~\ref{sec:restricted} we finalize the paper by
proposing Erd\H{o}s-Szekeres type
problems for point sets with integer coordinates.

\section{The Erd\H{o}s-Szekeres Construction}\label{sec:cons}
The Erd\H{o}s-Szekeres construction is made from  smaller point sets which we now describe.

\subsection{Cups and Caps}
A \emph{$k$-cup} of $S$ is a convex $k$-gon of $S$ bounded from above by a single edge;
similarly a \emph{$k$-cap} of $S$ is a convex $k$-gon of $S$ bounded from below by a single edge;
see Figure~\ref{fig:cupscaps}. Let $X$ and $Y$ be two sets of points in the plane.
We say that $X$ is \emph{high above} $Y$ if: every line determined by two points in $X$ is above every point in $Y$, and
every line determined by two points in $Y$ is below every point in $X$.

The building blocks of the Erd\H{o}s-Szekeres construction
are point sets $S_{k,l}$; which are constructed recursively as follows.

\begin{itemize} 
 \item $S_{k,l}:=\{(0,0)\}$ if $k \le 2$ or $l \le 2$; 
 \item $S_{k,l}:=L_{k,l} \cup R_{k,l}$; 

where:
\begin{IEEEeqnarray}{rCl}
L_{k,l} & :=& S_{k-1,l}; \nonumber \\
R_{k,l} &:= &\{(x+\delta_{k,l},y+\delta_{k,l}'):(x,y) \in S_{k,l-1}\}; \nonumber 
\end{IEEEeqnarray}
$\delta_{k,l}$ is chosen large enough so that $R_{k,l}$ is to the right of $L_{k,l}$; 

and
$\delta_{k,l}'$ is chosen large enough with respect to $\delta_{k,l}$ so that $R_{k,l}$ is high above $L_{k,l}$.
\end{itemize}
It can be shown by induction on $k+l$ that $S_{k,l}$ has $\binom{k+l-4}{k-2}$ points, and that $S_{k,l}$
does not contain a $k$-cup nor a $l$-cap.

\begin{figure}
  \begin{center}
   \includegraphics[width=0.7\textwidth]{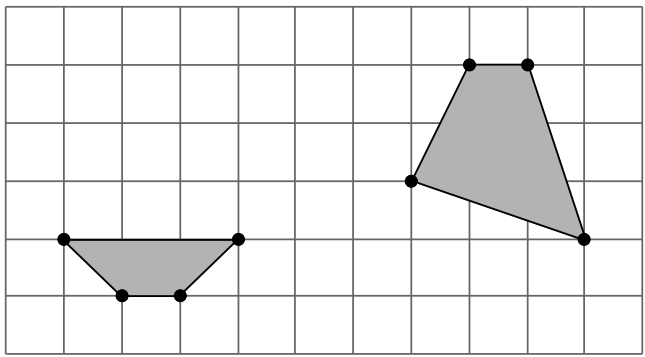}
\end{center}
\caption{A $4$-cup and a $4$-cap}
\label{fig:cupscaps}
\end{figure}

In Matou\v{s}ek's book~\cite{lectures} it is left as an exercise to show that $S_{k,l}$ can be realized
in an integer grid of polynomial size. According to Matou\v{s}ek this was noted by Valtr. We follow a different
approach; we prove that a superset of $S_{k,l}$ can be realized with small positive integer coordinates; even though
for some values of $k$ and $l$ this implies a realization of $S_{k,l}$ with integer coordinates of exponential size.

\subsection{A Superset of $S_{k,l}$}\label{sec:basic}

Let $r \ge 1$ be an integer. In this section we construct a point set $P_r$ in general position in the plane with
integer coordinates such that $P_r$ has $2^r$ points, and for $r=k+l-1$, $P_r$ contains $S_{k,l}$ as a subset.\footnote{More accurately, $P_{r}$ contains a subset with
the same order type as $S_{k,l}$.}

We define $P_r$ recursively as follows.
\begin{itemize} 
 \item $P_0:=\{(0,0)\}$; 
 \item $P_r:=L_{r} \cup R_{r}$; 

where:
\begin{IEEEeqnarray}{rCl}
L_{r} & :=& P_{r-1} \nonumber \\
R_{r} &:= &\{(x+\delta_{r},y+\delta_{r}'):(x,y) \in L_r\} \nonumber \\
\delta_r  & := & 3 \cdot 4^{r-1}; \label{eq:delta_r} \\[5pt]
\delta_r' & := & (3r+1) \cdot 4^{r-1}. \label{eq:delta_r'}
\end{IEEEeqnarray}
\end{itemize}

Let $X_r$ be the value of the largest $x$-coordinate of $P_r$;  note that for $r\ge 1$
\[X_r = X_{r-1}+\delta_r.\]
Since $X_0=0$, by induction we have that
\begin{equation}\label{eq:X}
 X_r=4^{r}-1.
\end{equation}
Let $Y_r$ be the value of the largest $y$-coordinate of $P_r$; note that for $r \ge 1$
\[Y_r=Y_{r-1}+\delta_r'.\]
Since $Y_1=0$, by induction we have that
\begin{equation}\label{eq:Y}
 Y_r= r\cdot 4^{r}.
\end{equation}
Since \[\delta_r>X_{r-1},\] every point
 of $L_r$ is to the left of every point of $R_r$.

 For $r\ge 2$, let $p_r$ be the rightmost point of $L_r$ and  let $q_r$ be the leftmost point of $R_r$;
 let $\ell_r$ be the straight line passing through $p_r$ and $q_r$; see Figure~\ref{fig:P}. 
 By construction of $P_r$, the point $p_r$ is the point of $L_r$ of largest
$y$-coordinate, and the point $q_r$ is the point of $R_r$ of smallest $y$-coordinate;  therefore, the slope $m_r$ of $\ell_r$
is given by 
\begin{IEEEeqnarray}{rCl}
m_r & = & \frac{Y_r-2Y_{r-1}}{X_r-2X_{r-1}}\nonumber \vspace{1em} \\
    & = & \frac{r4^r-2(r-1)4^{r-1}}{4^{r}-1-2(4^{r-1}-1)}\nonumber  \vspace{1em} \\
    & = & \frac{2r \cdot 4^{r-1}+2\cdot 4^{r-1}}{2\cdot 4^{r-1}+1}\nonumber \vspace{1em}\\
    & = & r+1-\frac{r+1}{2\cdot 4^{r-1}+1} \label{eq:m}. 
\end{IEEEeqnarray}

\begin{figure}
  \begin{center}
   \includegraphics[width=0.85\textwidth]{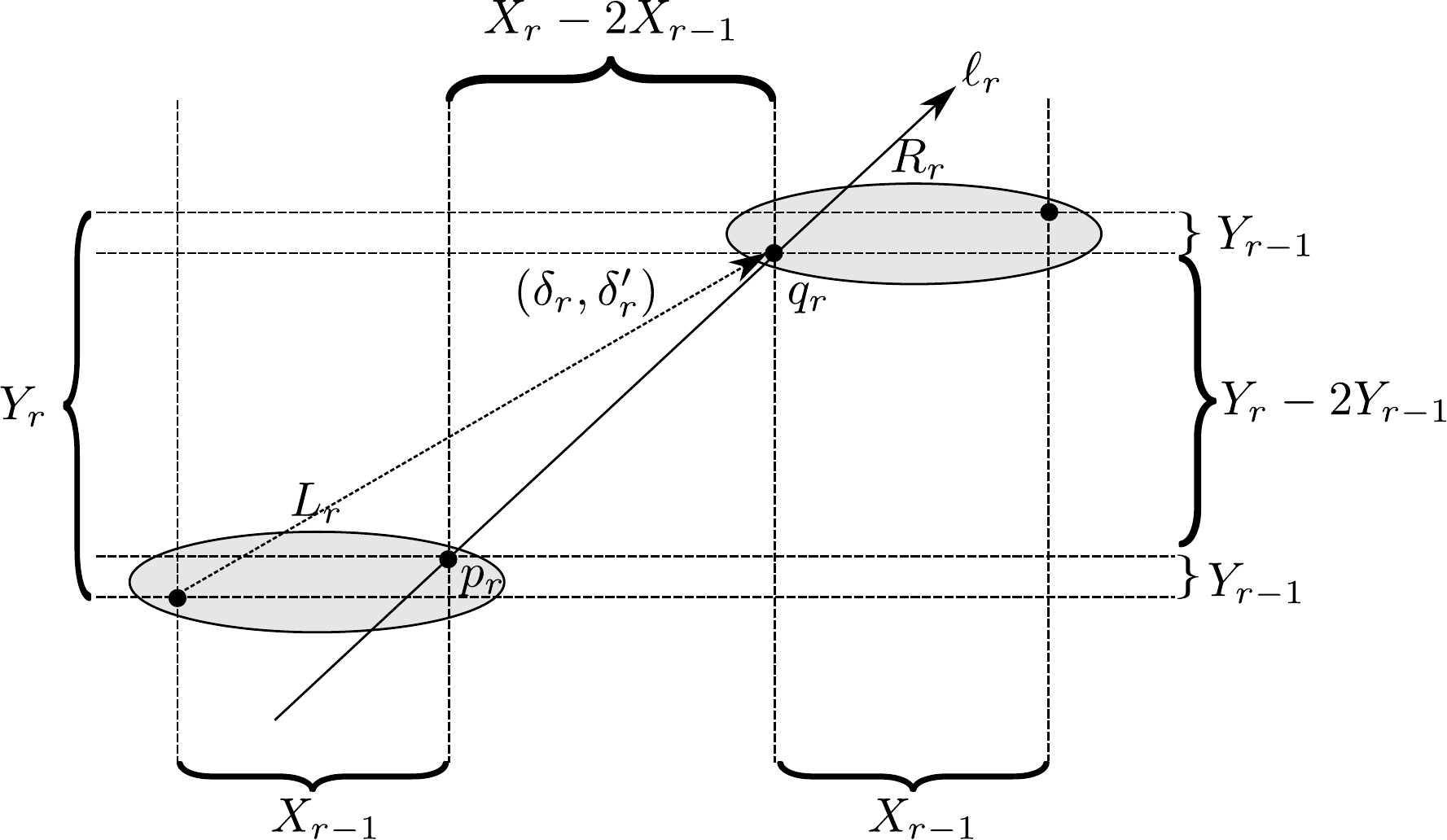}
\end{center}
\caption{$P_r$ }
\label{fig:P}
\end{figure}

The $m_r$ are increasing, since
\[ m_r-m_{r-1}  =  1+ \frac{r}{2\cdot 4^{r-2}+1}-\frac{r+1}{2 \cdot 4^{r-1}+1}>0;\]
the last inequality follows from the fact that $\frac{r+1}{2 \cdot 4^{r-1}+1}\le \frac{1}{3}$ for $r\ge 2$.

Let $L_{r-1}'$ and $R_{r-1}'$ be the translations of $L_{r-1}$ 
and $R_{r-1}$ in $R_{r}$, respectively. Let $\ell_{r-1}'$
be the line defined by the rightmost point of $L_{r-1}'$ and the leftmost
point of  $R_{r-1}'$. Thus, $\ell_{r-1}'$ is the translation of $\ell_{r-1}$ in $R_r$.
We now prove some properties of $P_r$.

\begin{lemma}\label{lem:slopes}
Among the lines passing through two points of $P_r$, $\ell_r$ is the line with the largest slope.
\end{lemma}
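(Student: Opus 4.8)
The plan is to induct on $r$. For the base case $r=2$ one checks the statement directly; for the inductive step assume the maximal slope among lines through two points of $P_{r-1}$ equals $m_{r-1}$ (when $r-1=1$ this is the trivial statement that the unique line has slope $m_1$). Any line through two points of $P_r$ falls into one of three cases: both points lie in $L_r$, both lie in $R_r$, or one lies in each. Since $L_r = P_{r-1}$ and $R_r$ is a translate of $P_{r-1}$, and translation preserves slopes, the inductive hypothesis bounds the slope in the first two cases by $m_{r-1}$; as the excerpt already shows that the sequence $m_r$ is strictly increasing, $m_{r-1} < m_r$ and these two cases cause no trouble. All the work is in the \emph{crossing} case.

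So fix $a=(x_a,y_a)\in L_r$ and $b=(x_b,y_b)\in R_r$. Because $\delta_r > X_{r-1}$, every point of $R_r$ lies strictly to the right of every point of $L_r$, so $x_b-x_a>0$, and the inequality $\operatorname{slope}(a,b)\le m_r$ is equivalent, after clearing the positive denominator, to $y_b-m_r x_b \le y_a - m_r x_a$. Introducing the linear functional $f(p):=y_p-m_r x_p$, the crossing case reduces to the clean separation statement
\[
\max_{b\in R_r} f(b)\;\le\;\min_{a\in L_r} f(a),
\]
and it suffices to show that the maximum on the left is attained at $q_r$ and the minimum on the right at $p_r$, since $p_r,q_r$ both lie on $\ell_r$, a line of slope $m_r$, whence $f(p_r)=f(q_r)$.

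I reduce each extremum to the inductive hypothesis. For the minimum over $L_r=P_{r-1}$: the excerpt records that $p_r$ is simultaneously the rightmost and the topmost point of $L_r$, so for any other $a\in L_r$ we have $x_a<x_{p_r}$, and $\operatorname{slope}(a,p_r)$ is the slope of a line through two points of $P_{r-1}$, hence at most $m_{r-1}<m_r$; rearranging $\operatorname{slope}(a,p_r)\le m_{r-1}<m_r$ gives $f(a)>f(p_r)$, so $p_r$ is the strict minimizer. For the maximum over $R_r$, note that $f$ on $R_r$ differs from $y-m_r x$ evaluated on the corresponding point of $P_{r-1}$ only by the additive constant $\delta_r'-m_r\delta_r$; thus it suffices to maximize $y-m_r x$ over $P_{r-1}$, and since $(0,0)$ is the unique leftmost and lowest point of $P_{r-1}$, the slope from $(0,0)$ to any other point is again a slope inside $P_{r-1}$ and so at most $m_{r-1}<m_r$, forcing the maximizer to be $(0,0)$, i.e.\ $q_r$ in $R_r$. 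Chaining $f(b)\le f(q_r)=f(p_r)\le f(a)$ then yields $\operatorname{slope}(a,b)\le m_r$, with equality only for $(a,b)=(p_r,q_r)$, completing the step.

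The step I expect to be the real obstacle is precisely the crossing case, and the tempting approach there is to bound the numerator $y_b-y_a$ and the denominator $x_b-x_a$ separately by their extremes; this fails, because maximizing $y_b-y_a$ wants $b$ at the top right and $a$ at the bottom left while minimizing $x_b-x_a$ wants the opposite, and the resulting quotient bound overshoots $m_r$. The device that removes this coupling is the reformulation through the single functional $f(p)=y_p-m_r x_p$, which turns one two-sided optimization into two independent extreme-point problems, each of which collapses onto the inductive hypothesis via the observation that $(0,0)$ and $p_r$ are extreme corners of $P_{r-1}$. The only auxiliary facts I need are that $(0,0)$ is the unique bottom-left point and $p_r$ the unique rightmost-topmost point of $P_{r-1}$, both of which are already supplied by the construction in the excerpt.
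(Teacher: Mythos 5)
Your proof is correct and takes essentially the same route as the paper: the same induction on $r$, the same three-case split, and the same reduction of the crossing case to comparisons against $p_r$ and $q_r$ via the inductive slope bound $m_{r-1}<m_r$. Your linear functional $f(p)=y_p-m_r x_p$ is an algebraic repackaging of the paper's polygonal-chain argument $C=(p,p_r,q_r,q)$ --- the chain of inequalities $f(b)\le f(q_r)=f(p_r)\le f(a)$ encodes exactly the paper's ``chord slope is at most the maximum edge slope'' step, with the two outer edges $(a,p_r)$ and $(q_r,b)$ bounded by the identical appeal to the inductive hypothesis inside $P_{r-1}$ and its translate.
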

\begin{proof}
We proceed by induction on $r$. 
 For $r=0$ and $r=1$, the lemma holds trivially. So assume that $r>1$ and that
the lemma holds for smaller values of $r$.  Let $\ell$ be a line passing through two points of $P_r$. 

Suppose that $\ell$ passes through two points
of $L_r$ or through two points of $R_r$. By induction
the slope of $\ell$ is at most $m_{r-1}$. Since $m_r> m_{r-1}$,
the slope of $\ell_r$ is larger than the slope of $\ell$.

Suppose that $\ell$ passes through a point $p$ of $L_r$ and a point
$q$ of $R_r$. Consider the polygonal chain $C:=(p, p_r, q_r, q)$.
Since $p$ is to the left of $p_r$ and $q_r$ is to the left of $q$, the slope of $\ell$
is at most the maximum of the slopes of the edges of $C$. By induction each of these
edges has slope at most $m_{r}$. Therefore, the slope of $\ell$ is at most the slope of $\ell_r$.
\end{proof}

\begin{lemma}\label{lem:rightmost}
 The rightmost point of $P_r$ is above $\ell_{r-1}$ and the leftmost point of $P_r$ is below $\ell_{r-1}'$.
\end{lemma}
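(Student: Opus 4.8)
The plan is to reduce both statements to the single fact, already implicit in \eqref{eq:m}, that $m_{r-1}$ is strictly below the integer $r$. First I would record the coordinates of the three objects involved. The rightmost point of $P_r$ lies in $R_r$ and is the image of the rightmost point of $P_{r-1}$; by \eqref{eq:X} and \eqref{eq:Y} it is $(X_r,Y_r)=(4^r-1,\,r4^r)$, while the leftmost point is the origin $(0,0)$ inherited from $P_0$. The line $\ell_{r-1}$ passes through $p_{r-1}=(X_{r-2},Y_{r-2})$ with slope $m_{r-1}$, so in point--slope form it is $y=Y_{r-2}+m_{r-1}(x-X_{r-2})$; translating by $(\delta_r,\delta_r')$ gives $\ell_{r-1}'\colon y=(Y_{r-2}+\delta_r')+m_{r-1}(x-X_{r-2}-\delta_r)$.

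For the first claim I would evaluate $\ell_{r-1}$ at $x=X_r$ and check that $Y_r$ exceeds it, i.e. that $Y_r-Y_{r-2}>m_{r-1}(X_r-X_{r-2})$. Using \eqref{eq:X} and \eqref{eq:Y}, one gets $X_r-X_{r-2}=15\cdot 4^{r-2}$ and $Y_r-Y_{r-2}=(15r+2)4^{r-2}$, so after dividing by $4^{r-2}$ the inequality becomes simply $m_{r-1}<r+\tfrac{2}{15}$. This is immediate from the closed form \eqref{eq:m}, which gives $m_{r-1}=r-\tfrac{r}{2\cdot 4^{r-2}+1}<r$. Hence the rightmost point lies strictly above $\ell_{r-1}$.

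For the second claim I see two routes. The direct one substitutes $(0,0)$ into $\ell_{r-1}'$ and, after inserting $\delta_r,\delta_r'$ from \eqref{eq:delta_r} and \eqref{eq:delta_r'}, reduces to $Y_{r-2}+\delta_r'>m_{r-1}(X_{r-2}+\delta_r)$; here $Y_{r-2}+\delta_r'=(13r+2)4^{r-2}$ and $X_{r-2}+\delta_r=13\cdot 4^{r-2}-1$, and the inequality again follows from $m_{r-1}<r$ with room to spare. The slicker route is to observe that $P_r$ is centrally symmetric about its center $(X_r/2,Y_r/2)$: the point reflection $\sigma(p)=(X_r,Y_r)-p$ exchanges $L_r$ and $R_r$, sends the rightmost point to the origin, and, since the endpoints $p_{r-1},q_{r-1}$ of $\ell_{r-1}$ form a centrally symmetric pair, carries $\ell_{r-1}$ to its translate $\ell_{r-1}'$ while interchanging ``above'' and ``below''. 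The second statement is then exactly the $\sigma$-image of the first.

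I expect no real obstacle here: the inequalities are far from tight, so the only thing requiring care is bookkeeping the translation that defines $\ell_{r-1}'$ and substituting the closed forms \eqref{eq:X}, \eqref{eq:Y}, \eqref{eq:m} correctly. If one takes the symmetric route, establishing the central symmetry is a one-line induction on $r$ using $L_r=P_{r-1}$ and $R_r=P_{r-1}+(\delta_r,\delta_r')$.
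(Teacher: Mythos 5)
Your direct computations coincide exactly with the paper's proof: the paper likewise evaluates $\ell_{r-1}$ at $x=X_r$ and $\ell_{r-1}'$ at $x=0$, reducing the two claims to $\frac{Y_r-Y_{r-2}}{X_r-X_{r-2}}=r+\frac{2}{15}>m_{r-1}$ and $\frac{Y_{r-2}+\delta_r'}{X_{r-2}+\delta_r}=r+\frac{2\cdot 4^{r-2}+r}{13\cdot 4^{r-2}-1}>m_{r-1}$, both of which follow from $m_{r-1}=r-\frac{r}{2\cdot 4^{r-2}+1}<r$, just as you argue. Your alternative central-symmetry route for the second claim is also sound (the reflection $\sigma(p)=(X_r,Y_r)-p$ does exchange $L_r$ and $R_r$ by an easy induction, and carries $\ell_{r-1}$ to $\ell_{r-1}'$ since the symmetry of $P_{r-1}$ swaps $p_{r-1}$ and $q_{r-1}$), but it is incidental here, as your primary argument is the paper's.
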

\begin{proof}
 The result holds trivially for $r=0$ and $r=1$; assume that $r\ge 2$.
  
  First we prove that the rightmost point $p$ of $P_r$ is above $\ell_{r-1}$.
  Note that $p=(X_r,Y_r)$.
 Let $q$ be the point in $\ell_{r-1}$ with $x$-coordinate equal to $X_r$;
 note that since $\ell_{r-1}$ contains the point $(X_{r-2},Y_{r-2})$,
 the $y$-coordinate of $q$ is equal to $Y_{r-2}+m_{r-1}(X_r-X_{r-2})$.
 Therefore, it is sufficient to show that: \[Y_r > Y_{r-2}+m_{r-1}(X_r-X_{r-2}). \]
 Equivalently that \[\frac{Y_r -Y_{r-2}}{X_r-X_{r-2}}>m_{r-1}.\]
 This follows from 
 \begin{IEEEeqnarray}{rCl}
  \frac{Y_r-Y_{r-2}}{X_r-X_{r-2}} & =& \frac{r4^{r}-(r-2)4^{r-2}}{4^{r}-1-(4^{r-2}-1)}\nonumber \vspace{1em} \\
                                  & = & \frac{15r\cdot 4^{r-2}+2\cdot 4^{r-2}}{15\cdot 4^{r-2}}\nonumber \vspace{1em}\\
                                  & = & r+\frac{2}{15},\nonumber
 \end{IEEEeqnarray}
and that by (\ref{eq:m})
 \[m_{r-1}=r-\frac{r}{2\cdot 4^{r-2}+1}.\]
 
 Now we prove that the leftmost point of $P_r$ is below $\ell_{r-1}'$. Note that $(0,0)$ is the leftmost point of $P_r$. 
 Let $q'$ be the point in $\ell_{r-1}'$ with $x$-coordinate equal to $0$;
 note that since $\ell_{r-1}'$ contains the point $(X_{r-2}+\delta_r,Y_{r-2}+\delta_r')$,
 the $y$-coordinate of $q'$ is equal to $Y_{r-2}+\delta_r'-m_{r-1}(X_{r-2}+\delta_r)$.
 Therefore, it is sufficient to show that: \[Y_{r-2}+\delta_r'-m_{r-1}(X_{r-2}+\delta_r)>0. \]
 Equivalently that \[\frac{Y_{r-2}+\delta_r'}{X_{r-2}+\delta_r} > m_{r-1}.\]
 This follows from 
 \begin{IEEEeqnarray}{rCl}
  \frac{Y_{r-2}+\delta_r'}{X_{r-2}+\delta_r} & =& \frac{(r-2)4^{r-2}+(3r+1)4^{r-1}}{4^{r-2}-1+3 \cdot 4^{r-1}}\nonumber \vspace{1em} \\
                                  & = & \frac{13r\cdot 4^{r-2}+2\cdot 4^{r-2}}{13\cdot 4^{r-2}-1}\nonumber \vspace{1em}\\
                                  & = & r+\frac{2 \cdot 4^{r-2}+r}{13 \cdot 4^{r-2}-1},\nonumber
 \end{IEEEeqnarray}
and that by (\ref{eq:m})
 \[m_{r-1}=r-\frac{r}{2\cdot 4^{r-2}+1}.\]
\end{proof}

\begin{lemma}\label{lem:ell_prop}
  The following properties hold.
 
 \begin{itemize}
 \item[(a)] $R_r$ is above $\ell_{r-1}$;
 \item[(b)] $L_r$ is below $\ell_{r-1}'$;
 \item[(c)] no point of $L_{r-1}$ is below $\ell_{r-1}$;
 \item[(d)] no point of $L_{r-1}'$ is below $\ell_{r-1}'$; 
 \item[(e)] no point of $R_{r-1}$ is above $\ell_{r-1}$; and
 \item[(f)] no point of $R_{r-1}'$ is above $\ell_{r-1}'$.
\end{itemize}
\end{lemma}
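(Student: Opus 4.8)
The plan is to reduce all six statements to the two lemmas already proven: Lemma~\ref{lem:slopes}, which says $m_{r-1}$ is the largest slope among lines through two points of $P_{r-1}$, and Lemma~\ref{lem:rightmost}, which locates the extreme points of $P_r$ relative to $\ell_{r-1}$ and $\ell_{r-1}'$. Throughout I would record that a point $(x_0,y_0)$ lies above (resp.\ below) a line $y = m_{r-1}x + c$ exactly when $y_0 - m_{r-1}x_0 > c$ (resp.\ $< c$), so that checking whether an entire set lies above such a line amounts to minimizing the linear functional $f(x,y) := y - m_{r-1}x$ over that set. I would also assume $r \ge 3$, which is what makes $\ell_{r-1}$ defined and the increasing property $m_{r-2} < m_{r-1}$ available; the smaller cases are trivial.

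First I would establish (c) and (e) directly from Lemma~\ref{lem:slopes}. Recall that $\ell_{r-1}$ passes through $p_{r-1}$, the rightmost point of $L_{r-1} = P_{r-2}$, and $q_{r-1}$, the leftmost point of $R_{r-1}$, and has slope $m_{r-1}$. For (c), take any point $p \in L_{r-1}$; the line through $p$ and $p_{r-1}$ has slope at most $m_{r-2}$ by Lemma~\ref{lem:slopes} applied to $P_{r-2}$, and $m_{r-2} < m_{r-1}$ since the $m_r$ are increasing. As $p$ lies to the left of $p_{r-1}$ and $\ell_{r-1}$ is steeper, $p$ cannot lie below $\ell_{r-1}$. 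Property (e) is the mirror statement: every point of $R_{r-1}$ lies to the right of $q_{r-1}$, the line joining it to $q_{r-1}$ has slope at most $m_{r-2} < m_{r-1}$, so it cannot lie above $\ell_{r-1}$. Since $L_{r-1}'$, $R_{r-1}'$ and $\ell_{r-1}'$ are the images of $L_{r-1}$, $R_{r-1}$ and $\ell_{r-1}$ under a single translation, (d) and (f) follow from (c) and (e) at once.

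The substantive part is (a) and (b). For (a) I would argue that $f$ is minimized over $R_r$ at the rightmost point $(X_r,Y_r)$ of $P_r$. Since $R_r$ is a translate of $P_{r-1}$, it suffices to locate the minimizer of $f$ over $P_{r-1} = L_{r-1} \cup R_{r-1}$. By (c) every point of $L_{r-1}$ satisfies $f \ge c$, and by (e) every point of $R_{r-1}$ satisfies $f \le c$, so the minimum is attained on $R_{r-1}$. Within $R_{r-1}$, a translate of $P_{r-2}$, every pairwise slope is at most $m_{r-2} < m_{r-1}$, which forces $f$ to decrease strictly from left to right; hence the minimum is attained at the rightmost point of $R_{r-1}$, which is the rightmost point $(X_{r-1},Y_{r-1})$ of $P_{r-1}$. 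Translating, the minimizer of $f$ over $R_r$ is $(X_r,Y_r)$, and Lemma~\ref{lem:rightmost} says this point lies above $\ell_{r-1}$; therefore all of $R_r$ does. Statement (b) is entirely symmetric: the same confinement argument shows $f$ is maximized over $L_r = P_{r-1}$ at its leftmost point $(0,0)$, which by Lemma~\ref{lem:rightmost} lies below $\ell_{r-1}'$, so all of $L_r$ lies below $\ell_{r-1}'$.

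The main obstacle I anticipate is precisely pinning down the extreme point in (a) and (b): one has to rule out that $f$ is minimized in the interior of $R_r$ rather than at the corner. The delicate point is that $R_r$, being a copy of $P_{r-1}$, contains the waist line $\ell_{r-1}'$ whose slope is exactly $m_{r-1}$, so not every pairwise slope inside $R_r$ is strictly below $m_{r-1}$, and a direct monotonicity argument over all of $R_r$ would only be weak, not strict. Using (c) and (e) to first confine the extremizer to a sub-cluster that is itself a copy of $P_{r-2}$ — where Lemma~\ref{lem:slopes} gives the strict bound $m_{r-2} < m_{r-1}$ — is what upgrades the monotonicity of $f$ to strict and hence makes the corner the unique extremizer, after which Lemma~\ref{lem:rightmost} finishes the argument.
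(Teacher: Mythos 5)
Your proof is correct, and for parts (c)--(f) it follows the paper's own route: (c) and (e) are obtained from the slope-extremality of Lemma~\ref{lem:slopes}, and (d), (f) follow by translation. (One cosmetic difference: you bound the slope of the line through $p$ and $p_{r-1}$ by $m_{r-2}$ using Lemma~\ref{lem:slopes} one level down, while the paper argues that a violating point together with $p_{r-1}$, resp.\ $q_{r-1}$, would span a line of slope strictly greater than $m_{r-1}$, contradicting Lemma~\ref{lem:slopes} at level $r-1$; both are fine.) Where you genuinely diverge is (a) and (b). The paper proves these in one step, independently of (c)--(f): if some $p\in R_r$ were on or below $\ell_{r-1}$, then since the rightmost point of $R_r$ is strictly above $\ell_{r-1}$ by Lemma~\ref{lem:rightmost} and strictly to the right of $p$, the segment joining them would have slope strictly greater than $m_{r-1}$, contradicting Lemma~\ref{lem:slopes} applied to the translate $R_r$ of $P_{r-1}$; (b) is symmetric. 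Your alternative --- confining the extremizer of $f(x,y)=y-m_{r-1}x$ to the copy of $P_{r-2}$ via (c) and (e), where the strict bound $m_{r-2}<m_{r-1}$ applies --- is valid, but the ``delicate point'' you invoke to justify the detour is illusory: weak monotonicity of $f$ over $R_r$ (all slopes $\le m_{r-1}$) already makes the rightmost point a weak minimizer of $f$, and the strict conclusion $f>c$ on all of $R_r$ then comes from the strict inequality of Lemma~\ref{lem:rightmost} at that single point, not from strict monotonicity of $f$. Equivalently, in the contradiction form it suffices that \emph{one} of the two straddling points is strictly off the line to force a slope strictly above $m_{r-1}$. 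So your argument is sound but buys only a heavier dependency structure ((a),(b) resting on (c),(e)) where the paper's direct two-point argument keeps all six parts hanging off Lemmas~\ref{lem:slopes} and~\ref{lem:rightmost} alone; both treatments dispatch the small values of $r$ the same way, by direct verification.
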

\begin{proof}
 For $r=0,1,2$ the lemma can be verified directly or holds trivially; assume that $r>2$.
 
 \begin{itemize}
  \item[\emph{(a)}]
  By Lemma~\ref{lem:rightmost}, the rightmost point of $R_{r}$ is above $\ell_{r-1}$.
  If a point $p$ of $R_r$ is below $\ell_{r-1}$, then the 
line defined by $p$ and the rightmost point of $R_r$ has slope larger than $m_{r-1}$---
a contradiction to Lemma~\ref{lem:slopes} and the fact that
$R_r$ is a translation of $P_{r-1}$. 

\item[\emph{(b)}]
  By Lemma~\ref{lem:rightmost}, the leftmost point of $L_{r}$ is below $\ell_{r-1}'$.
  If a point $p$ of $L_r$ is above $\ell_{r-1}'$, then the 
line defined by $p$ and the leftmost point of $L_r$ has slope larger than the slope of $\ell_{r-1}'$---
a contradiction to Lemma~\ref{lem:slopes} and the fact that
$\ell_{r-1}'$ is parallel to $\ell_{r-1}$. 

\item[\emph{(c)}]
If a point $p$ of $L_{r-1}$ is below $\ell_{r-1}$, then the line defined by
$p$ and the rightmost point of $L_{r-1}$ has slope larger than $m_{r-1}$---
a contradiction to Lemma~\ref{lem:slopes}. 

\item[\emph{(d)}]
Follows from \emph{(c)} and the fact that $R_r$ is a translation of $L_r$.

\item[\emph{(e)}]
If a point $p$ of $R_{r-1}$ is above $\ell_{r-1}$, then the line defined by
$p$ and the leftmost point of $R_{r-1}$ has slope larger than $m_{r-1}$---
a contradiction to Lemma~\ref{lem:slopes}. 

\item[\emph{(f)}]
Follows from \emph{(e)} and the fact that $R_r$ is a translation of $L_r$.
 \end{itemize}

\end{proof}

\begin{lemma}\label{lem:high_above}
 $R_r$ is high above $L_r$.
\end{lemma}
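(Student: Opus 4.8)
The plan is to verify directly the two requirements in the definition of ``high above'': (i) every line through two points of $R_r$ lies above every point of $L_r$, and (ii) every line through two points of $L_r$ lies below every point of $R_r$. Everything is organized around the linear functional $\phi(x,y):=y-m_{r-1}x$, whose level sets are exactly the lines of slope $m_{r-1}$. For small $r$ (where $L_r$ and $R_r$ are single points) both requirements are vacuous, so I may assume $r\ge 2$ and use the value of $m_{r-1}$ coming from~(\ref{eq:m}).

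The engine is Lemma~\ref{lem:slopes}. Since every segment joining two points of $P_{r-1}$ has slope at most $m_{r-1}$, for points $u$ (left) and $v$ (right) of $P_{r-1}$ one has $\phi(v)-\phi(u)=(x_v-x_u)(s_{uv}-m_{r-1})\le 0$, where $s_{uv}$ is the slope of $uv$. Hence $\phi$ attains its maximum over $L_r=P_{r-1}$ at the leftmost point $(0,0)$, where $\phi=0$, and its minimum at the rightmost point. The set $R_r$ is a translate of $P_{r-1}$ and has the same chord slopes, so the same reasoning shows that $\phi$ attains its minimum over $R_r$ at the rightmost point $(X_r,Y_r)$, with value $Y_r-m_{r-1}X_r$.

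Next I would do the sign bookkeeping for (i); statement (ii) is symmetric. Fix $c=(x_c,y_c)\in L_r$ and a line $\ell$ through points $a,b\in R_r$ with $a$ the left endpoint and slope $s\le m_{r-1}$. Because $L_r$ lies entirely to the left of $R_r$ we have $x_c<x_a$, so $(m_{r-1}-s)(x_a-x_c)\ge 0$ gives $\ell(x_c)=y_a+s(x_c-x_a)\ge \phi(a)+m_{r-1}x_c\ge (Y_r-m_{r-1}X_r)+m_{r-1}x_c$. On the other hand $y_c=\phi(c)+m_{r-1}x_c\le m_{r-1}x_c$. Subtracting, $\ell(x_c)-y_c\ge Y_r-m_{r-1}X_r$, and the same computation in (ii) (now extending a chord of $L_r$ rightward to a point of $R_r$) yields the identical quantity. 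Both requirements therefore collapse to the single inequality $Y_r>m_{r-1}X_r$.

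It remains to settle this one inequality, which I would do with the explicit formulas: by~(\ref{eq:m}) we have $m_{r-1}=r-\frac{r}{2\cdot 4^{r-2}+1}<r$, so by~(\ref{eq:X}) and~(\ref{eq:Y}), $m_{r-1}X_r<r(4^r-1)<r\cdot 4^r=Y_r$. The one place that needs care is the sign bookkeeping of the third paragraph: one must track the signs of $x_c-x_a$ and of the corresponding quantity in~(ii) so that the bound $s\le m_{r-1}$ pushes $\ell$ toward the worst case, thereby pinning the extremal configuration to the two corners $(X_r,Y_r)$ and $(0,0)$. Notably this route needs only Lemma~\ref{lem:slopes} and the explicit coordinates; the separating-line facts of Lemma~\ref{lem:ell_prop} offer an alternative but are not required here.
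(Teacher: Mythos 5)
Your proof is correct, and it takes a genuinely different route from the paper's. The paper proves Lemma~\ref{lem:high_above} by induction on $r$, with a case analysis over whether the line $\ell$ uses points of $L_{r-1}$, $R_{r-1}$, $L_{r-1}'$ or $R_{r-1}'$, combining the induction hypothesis with the separating-line facts of Lemma~\ref{lem:ell_prop} (which themselves rest on Lemma~\ref{lem:rightmost}). You instead give a direct, non-inductive argument: since by Lemma~\ref{lem:slopes} every chord of $P_{r-1}$ (hence of its translate $R_r$) has slope at most $m_{r-1}$, the functional $\phi(x,y)=y-m_{r-1}x$ is non-increasing from left to right on each of $L_r$ and $R_r$ (all $x$-coordinates being distinct), so $\max_{L_r}\phi=\phi(0,0)=0$ and $\min_{R_r}\phi=Y_r-m_{r-1}X_r$; your sign bookkeeping checks out --- in (i) the product $(s-m_{r-1})(x_c-x_a)\ge 0$ because both factors are nonpositive, and in (ii) the analogous product has the opposite signs --- so both halves of the definition of ``high above'' do collapse to the single strict inequality $Y_r>m_{r-1}X_r$, which follows from $m_{r-1}<r$, $X_r=4^r-1$ and $Y_r=r\cdot 4^r$ via (\ref{eq:m}), (\ref{eq:X}) and (\ref{eq:Y}). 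Your route buys a real simplification: it needs only Lemma~\ref{lem:slopes} plus the explicit coordinates, rendering Lemmas~\ref{lem:rightmost} and~\ref{lem:ell_prop} unnecessary for this lemma (in the paper they serve no other purpose), it isolates the exact quantitative content as one corner-to-corner inequality, and it sidesteps the case analysis, whose write-up in the paper actually contains small slips (e.g.\ the concluding ``$R_r$ is above $\ell$'' where ``$L_r$ is below $\ell$'' is meant, and an $\ell_{r-1}$ that should be $\ell_{r-1}'$). What the paper's approach buys is chiefly structural: it argues qualitatively from separating lines and slope monotonicity, a template that transfers to variants where explicit coordinates are replaced by recursive choices of $\delta_r,\delta_r'$, as in the optimizations of Section~\ref{sec:imp}; as a verification of this particular lemma, however, your argument is shorter and complete.
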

\begin{proof}  
For $r=0,1,2$ the lemma holds trivially or can be verified directly; assume that $r>2$
 and that lemma holds for smaller values of $r$. We proceed by induction
 on $r$.
 
We first prove that $R_r$ is above every line $\ell$ defined by two points of 
$L_r$. By $\emph{(a)}$ of Lemma~\ref{lem:ell_prop}, $R_{r}$ is above $\ell_{r-1}$. 
By Lemma~\ref{lem:slopes}, the slope of $\ell$ is at most the slope of $\ell_{r-1}$.
Thus we may assume that $\ell$ does not contain the rightmost point
of $L_{r-1}$ nor the leftmost point of $R_{r-1}$. 
Suppose that $\ell$ passes through a point of $R_{r-1}$; 
then, by \emph{(e)} of Lemma~\ref{lem:ell_prop}, this point is below $\ell_{r-1}$.
 Since the slope of $\ell$
 is at most the slope of $\ell_{r-1}$, $R_r$ is above $\ell$ in this case.
Suppose that $\ell$ passes through two points of $L_{r-1}$. Then, by induction
the leftmost point of $R_{r-1}$ is above $\ell$. 
Since the slope of $\ell$
is at most the slope of $\ell_{r-1}$, $R_r$ is above $\ell$ in this case.
 
 We now prove that $L_r$ is below every line $\ell$ defined by two points of $R_r$.
By \emph{(b)} of Lemma~\ref{lem:ell_prop}, $L_{r}$ is below $\ell_{r-1}'$. Since
$R_r$ is a translation of $P_{r-1}$, by Lemma~\ref{lem:slopes}, we have that the slope of $\ell$ is at most the slope of $\ell_{r-1}'$.
Thus we may assume that $\ell$ does not contain the rightmost point
of $L_{r-1}'$ nor the leftmost point of $R_{r-1}'$. 
Suppose that $\ell$ passes through a point of $L_{r-1}'$; 
then, by \emph{(d)} of Lemma~\ref{lem:ell_prop}, this point is above $\ell_{r-1}'$.
 Since the slope of $\ell$ is at most the slope of $\ell_{r-1}$, $L_r$ is below $\ell$ in this case.
Suppose that $\ell$ passes through two points of $R_{r-1}'$. Since $R_r$ is translation
of $P_{r-1}$, by induction we have that 
the rightmost point of $L_{r-1}'$ is below $\ell$. 
Since the slope of $\ell$
is at most the slope of $\ell_{r-1}'$, $R_r$ is above $\ell$ in this case.
The result follows.
\end{proof}

\begin{prop}\label{prop:grid}
 $P_r$ can be realized with non-negative integer coordinates of size at most
 $r4^r$.
\end{prop}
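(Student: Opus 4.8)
The plan is to recognize that the recursive construction of $P_r$ already produces the claimed realization directly: the offsets $\delta_r$ and $\delta_r'$ defined in (\ref{eq:delta_r}) and (\ref{eq:delta_r'}) are non-negative integers, so the points of $P_r$ come with integer coordinates for free, and no rescaling or symbolic perturbation is required. Thus the proposition reduces to two arithmetic verifications---that the coordinates are non-negative integers and that they are bounded by $r4^r$---together with the observation that the earlier lemmas certify the construction is a bona fide realization.

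First I would check by induction on $r$ that every coordinate of $P_r$ is a non-negative integer. The base case $P_0=\{(0,0)\}$ is clear. For the step, $L_r=P_{r-1}$ has non-negative integer coordinates by hypothesis, and $R_r$ is its translate by $(\delta_r,\delta_r')$; since $\delta_r=3\cdot 4^{r-1}$ and $\delta_r'=(3r+1)\cdot 4^{r-1}$ are non-negative integers for $r\ge 1$, the points of $R_r$ inherit the property. The size bound is then immediate from the closed forms already established: by (\ref{eq:X}) the largest $x$-coordinate is $X_r=4^r-1$ and by (\ref{eq:Y}) the largest $y$-coordinate is $Y_r=r\cdot 4^r$, so, all coordinates being non-negative, the grid size is $\max\{X_r,Y_r\}=r\cdot 4^r$ for $r\ge 1$.

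It remains to note that these integer points really do realize $P_r$. The remark after (\ref{eq:X}) gives $\delta_r>X_{r-1}$, placing $R_r$ to the right of $L_r$, and Lemma~\ref{lem:high_above} places $R_r$ high above $L_r$; since ``high above'' forbids any line through two points of one part from meeting the other part, combining this with the inductive general position of $L_r$ and $R_r$ shows $P_r$ is in general position. I do not expect a real obstacle: the difficulty has been front-loaded into Lemmas~\ref{lem:slopes}--\ref{lem:high_above}. The only point requiring care is the tension built into the choice of offsets---the same explicit $\delta_r,\delta_r'$ must be large enough to force the geometric relations yet small enough to keep $Y_r=r\cdot 4^r$ within the bound---and this balance is exactly what the preceding lemmas secure.
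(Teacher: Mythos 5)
Your proposal is correct and takes essentially the same route as the paper, whose entire proof is the one-line observation that $X_r=4^r-1$ and $Y_r=r4^r$; your added induction on integrality of the coordinates and the closing remarks about general position are harmless but not needed, since non-negativity and integrality are immediate from the recursive definition and the proposition claims only a coordinate bound, not a geometric property.
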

\begin{proof}
 This follows from 
  $X_r=4^r-1$ and $Y_r=r4^r$.
\end{proof}

\begin{prop}\label{prop:subset}
 Let $k,l$ be positive integers. If $r:=k+l-1$ then $S_{k,l}$ is a subset of $P_r$.
\end{prop}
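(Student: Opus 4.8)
The plan is to induct on $r=k+l-1$, but to prove a statement slightly stronger than the one stated: that there is a bijection from $S_{k,l}$ onto a subset of $P_r$ preserving \emph{both} the order type \emph{and} the left-to-right ($x$-coordinate) order of the points. The strengthening is what makes the induction self-contained, since the order type alone does not pin down the left-right order of a pair — a rotation by $180^\circ$ preserves all orientations yet reverses it — and this is exactly the information needed to fix the orientation of the mixed triples below. The parameters cooperate: both pieces of the $S_{k,l}$ recursion, namely $L_{k,l}=S_{k-1,l}$ and (a translate of) $S_{k,l-1}$, have parameter $(k-1)+l-1=k+(l-1)-1=r-1$, matching the single recursive parameter of $P_r=L_r\cup R_r$ with $L_r=P_{r-1}$ and $R_r$ a translate of $P_{r-1}$.

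For the base case $k\le 2$ or $l\le 2$, the set $S_{k,l}=\{(0,0)\}$ is a single point, which maps to $(0,0)\in P_r$; order type and $x$-order are vacuous. For the inductive step I assume $k,l>2$. By the induction hypothesis I embed $L_{k,l}=S_{k-1,l}$ into $L_r=P_{r-1}$ as a subset $A_L$, and I embed $S_{k,l-1}$ into $P_{r-1}$; translating that image by $(\delta_r,\delta_r')$ places it as a subset $A_R\subseteq R_r$ which realizes $R_{k,l}$, since translation preserves both order type and $x$-order. It remains to argue that the combined map onto $A_L\cup A_R$ preserves order type.

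The heart of the argument is a \emph{merge lemma}: if $A=A_L\cup A_R$ with $A_R$ high above and to the right of $A_L$, and $B=B_L\cup B_R$ is of the same form, and there are order-type- and $x$-order-preserving bijections $A_L\to B_L$ and $A_R\to B_R$, then the combined bijection $A\to B$ also preserves order type and $x$-order. The $x$-order claim is immediate: each part is preserved, and across parts the lower-left block stays left of the upper-right block in both sets. For the order type, triples contained in a single part are handled by that part's bijection; for a mixed triple with two points $p,q$ in the lower part and one point in the upper part, the high-above property forces the upper point to lie above the line through $p$ and $q$, so the orientation of the triple equals the sign of $q_x-p_x$, which is preserved because $x$-order is preserved. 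The symmetric computation handles one lower and two upper points. I then apply this with $A=S_{k,l}$, where $R_{k,l}$ is high above and right of $L_{k,l}$ by construction, and with $B$ realized inside $P_r$, where $R_r$ is high above $L_r$ by Lemma~\ref{lem:high_above} and to its right since $\delta_r>X_{r-1}$; crucially these two properties pass to the subsets $A_L\subseteq L_r$ and $A_R\subseteq R_r$, because any line through two points of $A_L$ is a line through two points of $L_r$ and hence lies below all of $R_r\supseteq A_R$, and symmetrically. This yields that $A_L\cup A_R\subseteq P_r$ has the same order type as $S_{k,l}$, closing the induction.

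The hard part will be the merge lemma, and specifically the realization that order type must be supplemented by the $x$-order in the induction hypothesis; once the mixed-triple orientations are computed and shown to depend only on the preserved left-right order, the remaining steps are routine bookkeeping about inheritance of the ``high above'' relation by subsets.
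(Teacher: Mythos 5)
Your proof is correct, but it takes a genuinely different route from the paper's. The paper exploits the fact that $\delta_{k,l}$ and $\delta_{k,l}'$ are free parameters of the $S_{k,l}$ recursion: its proof simply sets $\delta_{k,l}:=\delta_r$ and $\delta_{k,l}':=\delta_r'$, so that by induction $L_{k,l}=S_{k-1,l}\subseteq P_{r-1}=L_r$ and $R_{k,l}=S_{k,l-1}+(\delta_r,\delta_r')\subseteq R_r$, and the containment becomes \emph{literal}; these choices are admissible because $\delta_r>X_{r-1}$ and Lemma~\ref{lem:high_above} supply ``to the right of'' and ``high above,'' both of which pass to subsets --- the same inheritance observation you make at the end of your argument. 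You instead treat $S_{k,l}$ as an arbitrary realization (any admissible deltas) and prove an order-type-preserving embedding via your merge lemma, strengthening the induction hypothesis with preservation of $x$-order so that the orientations of mixed triples are forced by the ``high above'' relation together with the preserved left-to-right order. That is precisely what the paper's footnote (``$P_r$ contains a subset with the same order type as $S_{k,l}$'') actually asserts, so your version is the more general statement: it covers every realization of $S_{k,l}$, and it makes explicit a subtlety the paper never has to confront because it fixes coordinates outright --- namely that order type alone does not determine left-right order, which is why your strengthened hypothesis is needed and is the real content of your merge lemma. What the paper's parameter specialization buys is brevity (a three-line proof) and a concrete coordinatized copy of $S_{k,l}$ inside $P_r$; what your route buys is robustness to the choice of deltas and an honest justification of the footnote, at the cost of carrying the auxiliary $x$-order invariant through the induction.
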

\begin{proof}
 The result holds for $k\le 2$ or $l \le 2$, since in these cases
 $S_{k,l}=\{(0,0)\}$. Therefore, the result holds for $r\le 4$.
 Assume that $k,l \ge 2$, $r \ge 5$ and that the result holds for smaller
 values of $r$.
 By induction $S_{k-1,l}$ and $S_{k,l-1}$ are subsets of $P_{r-1}$. The result
 follows from Lemma~\ref{lem:high_above} and by setting $\delta_{k,l}:=\delta_{r}$
 and $\delta_{k,l}':=\delta_{r}'$.
\end{proof}

\subsection{The Erd\H{o}s-Szekeres Construction with Small Integer Coordinates}\label{sec:cons_small}

In this section we use the set of points described in Section~\ref{sec:basic}
to realize, with small integer coordinates, the construction given by Erd\H{o}s and Szekeres in \cite{erdos_cons}.
The Erd\H{o}s-Szekeres construction is made from a small number of translations of $S_{k,l}$ 
(for the pairs $(k, l)$, where $k+l$ is fixed). We first describe these translations.

Let $t > 0$ be an integer and let $n:=2^{t-2}$. For every integer $1 \le i \le t-2$ we  define the vector
\[v_i:=(3(t-i),-3i).\]
Using these vectors, we define a set of $t-1$ points $w_0,\dots, w_{t-2}$ recursively as follows.
\begin{itemize}
 \item $w_0:=(0,0)$;
 
 \item $w_{i+1}:=w_{i}+v_{i}$ for $i=0,\dots, t-3$.
\end{itemize}
For $i=0,\dots,t-2$, let $C_i$ be the unit square whose lower left corner is equal to $w_i$.

\begin{lemma}\label{lem:union_squares}
 The union, $\bigcup C_i$, of the squares $C_i$ lies in a $3t^2 \times 3t^2$ integer grid.
\end{lemma}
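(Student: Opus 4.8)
The plan is to unroll the recursion into a closed form for each corner $w_j$, use the signs of the defining vectors to locate the extreme coordinates, and then bound the resulting width and height by $3t^2$. Since $w_0=(0,0)$ and each step adds a vector, summing gives, for $0\le j\le t-2$,
\[
w_j=\sum_{i=1}^{j}v_i=\left(3\sum_{i=1}^{j}(t-i),\,-3\sum_{i=1}^{j}i\right)=\left(3jt-\frac{3j(j+1)}{2},\,-\frac{3j(j+1)}{2}\right),
\]
where I have used $\sum_{i=1}^{j}i=\frac{j(j+1)}{2}$. In particular every $w_j$ has integer coordinates, since $j(j+1)$ is even, so the squares $C_i$ have integer corners and it makes sense to speak of an integer grid.

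The key observation is that each vector $v_i$ with $1\le i\le t-2$ has strictly positive first coordinate $3(t-i)>0$ and strictly negative second coordinate $-3i<0$. Consequently the $x$-coordinate of $w_j$ is strictly increasing in $j$ and the $y$-coordinate is strictly decreasing. Hence over $j\in\{0,\dots,t-2\}$ the minimum $x$-coordinate and maximum $y$-coordinate are attained at $w_0=(0,0)$, while the maximum $x$-coordinate and minimum $y$-coordinate are attained at $w_{t-2}$. Because each $C_i$ is a unit square with lower-left corner $w_i$, the union $\bigcup C_i$ is contained in the axis-aligned rectangle whose $x$-range is $[0,\,x_{t-2}+1]$ and whose $y$-range is $[y_{t-2},\,1]$, where I write $w_{t-2}=(x_{t-2},y_{t-2})$.

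It then remains to evaluate the closed form at $j=t-2$ and bound it. A short computation gives
\[
x_{t-2}=\frac{3(t-2)(t+1)}{2},\qquad y_{t-2}=-\frac{3(t-2)(t-1)}{2},
\]
so that the width of the bounding rectangle is $x_{t-2}+1=\frac{3(t-2)(t+1)}{2}+1$ and its height is $1-y_{t-2}=\frac{3(t-2)(t-1)}{2}+1$. Since $(t-2)(t+1)<t^2$ and $(t-2)(t-1)<t^2$ for $t\ge 1$, both the width and the height are less than $\frac{3t^2}{2}+1$, which is at most $3t^2$. Translating the whole configuration upward by $|y_{t-2}|$ to make all coordinates non-negative, the union therefore fits inside a $3t^2\times 3t^2$ integer grid.

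I expect no serious obstacle; the one step that needs care is the monotonicity argument. It is what lets us conclude that the extreme coordinates occur at the endpoints $j=0$ and $j=t-2$ rather than somewhere in the interior, thereby reducing the whole estimate to a single evaluation of $w_{t-2}$ and sparing us from maximizing a quadratic in $j$.
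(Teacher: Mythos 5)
Your proof is correct and takes essentially the same approach as the paper: both arguments use the signs of the components of the $v_i$ (positive in $x$, non-positive in $y$) to conclude that the extreme coordinates among the $w_j$ are the full sums of the increments, and then bound those sums by $3t^2$; you merely make the closed form, the endpoint evaluation at $j=t-2$, and the final translation explicit where the paper is terse. (Your indexing $w_j=\sum_{i=1}^{j}v_i$ differs by one from the paper's summation over $i=0,\dots,t-3$, but this reflects an off-by-one ambiguity in the paper's own recursion $w_{i+1}=w_i+v_i$ versus its definition of $v_i$ for $1\le i\le t-2$, and either convention yields the stated bound.)
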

\begin{proof}
Note that the largest absolute value of the $x$-coordinates of the $w_i$'s is equal to 
\[\sum_{i=0}^{t-3} (3t-i)< 3t^2;\]
and the  largest absolute value of the $y$-coordinates of the $w_i$'s is equal to 
\[\sum_{i=0}^{t-3} 3i< 3t^2.\]
Therefore, $\bigcup C_i$ lies in a $3t^2 \times 3t^2$ integer grid.
\end{proof}

Let $D_i$ be the square $C_i$ scaled by factor of $(t+1)4^{t+1}$,
that is \[D_{i}:=\{\left ( (t+1)4^{t+1}x ,(t+1)4^{t+1} y \right ):(x,y) \in C_i \}.\]

\begin{lemma}\label{lem:turn}
 Let $0 \le i < j < k \le r-2$ be three integers; let $p_i, p_j,$  and $p_k$ be three points
in $D_i, D_j$ and $D_k$, respectively. Then $(p_i, p_j, p_k)$ is a right turn.
\end{lemma}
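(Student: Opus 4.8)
The goal is to show that any three points $p_i, p_j, p_k$ chosen from the scaled squares $D_i, D_j, D_k$ (with $i < j < k$) always make a right turn. The plan is to reduce the statement to a property of the \emph{centers} of the squares $w_i, w_j, w_k$, and then show that the scaling factor $(t+1)4^{t+1}$ is large enough that the unit-square ``wiggle room'' inside each $D_i$ cannot flip the orientation. First I would recall that the orientation of $(p_i, p_j, p_k)$ is the sign of the cross product $(p_j - p_i) \times (p_k - p_i)$, so it suffices to understand this determinant.

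\textbf{The main computation on the centers.}
First I would verify that the sequence of center points $w_0, \dots, w_{t-2}$ is in convex position forming a \emph{cap} (a right-turning chain). By construction $w_{i+1} - w_i = v_i = (3(t-i), -3i)$, so the displacement vectors have strictly increasing slopes $-i/(t-i)$ as $i$ increases: the $x$-component decreases while the $y$-component becomes more negative, so the chain turns consistently to the right. More precisely, for $i < j < k$ the cross product $(w_j - w_i) \times (w_k - w_j)$ is a sum of terms $v_a \times v_b$ with $a < b$, each of which I would compute to be strictly negative (this is the routine determinant $v_a \times v_b = 3(t-a)\cdot(-3b) - (-3a)\cdot 3(t-b) = 9(tb - ab - at + ab)\cdot(-1)$, simplifying to a negative multiple of $(b-a)$). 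This establishes that the centers alone make a right turn, and moreover gives a \emph{quantitative lower bound} on the magnitude of the determinant, which is the crucial output of this step.

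\textbf{Controlling the perturbation via the scaling factor.}
The hard part will be showing that the determinant's sign is robust: each $p_m$ lies in $D_m$, which is the square of side $(t+1)4^{t+1}$ anchored at the scaled corner $(t+1)4^{t+1} w_m$. Writing $p_m = (t+1)4^{t+1} w_m + e_m$ where $e_m$ is an error vector with both coordinates in $[0, (t+1)4^{t+1}]$, I would expand the determinant $(p_j - p_i) \times (p_k - p_i)$ bilinearly. The dominant term is $\bigl((t+1)4^{t+1}\bigr)^2$ times the center determinant from the previous step, whose magnitude I bounded below by (a constant times) the scale-free cross product of the $w$'s. The remaining ``mixed'' and ``error-error'' terms each carry at most one factor of $(t+1)4^{t+1}$, together with bounded factors coming from the $O(t)$-sized coordinate differences of the $w$'s and the $O((t+1)4^{t+1})$-sized errors. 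The key inequality is therefore that the quadratic-in-scale main term strictly dominates the at-most-linear-in-scale perturbation; since the center determinant is at least a fixed positive quantity and the scale $(t+1)4^{t+1}$ grows without bound, this is where the specific choice of scaling factor is used, and I expect verifying that this margin is adequate for all admissible $t$ to be the one genuine obstacle. Once this domination inequality is in hand, the sign of the full determinant equals the sign of the center determinant, which is negative, so $(p_i, p_j, p_k)$ is a right turn and the lemma follows.
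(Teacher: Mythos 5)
Your center computation is fine (indeed $v_a\times v_b=9t(a-b)<0$ for $a<b$, so the chain $w_0,\dots,w_{t-2}$ turns right), but the perturbation step --- which you yourself identify as the crux --- fails as stated. You correctly write $p_m=(t+1)4^{t+1}w_m+e_m$ with $e_m\in[0,(t+1)4^{t+1}]^2$: the wiggle room inside $D_m$ is the \emph{full side} $s:=(t+1)4^{t+1}$ of the scaled square, not a unit. Consequently the mixed terms $s\,(w_j-w_i)\times(e_k-e_i)$ and the error-error term $(e_j-e_i)\times(e_k-e_i)$ are all of order $s^2$, exactly like the main term; your claim that they ``carry at most one factor of $(t+1)4^{t+1}$'' contradicts your own bound on $e_m$. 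The deeper point is that no choice of scaling factor could help: $D_i$ is the image of $C_i$ under a homothety centered at the origin, so the orientation determinant for points in the $D$'s is exactly $s^2$ times the determinant for the corresponding points in the unit squares $C_i$ --- the scale cancels, and orientation is scale-invariant. The factor $(t+1)4^{t+1}$ exists only so that $S_{t-i,i+2}$ fits inside $D_i$; it plays no role whatsoever in this lemma, so an argument whose engine is ``the scale grows without bound'' cannot be right.

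The argument is repairable, but only at the unit-square scale, and the inequality there needs care rather than generic domination. With $e_m\in[0,1]^2$, $p:=j-i$, $q:=k-j$, one gets main term $(w_j-w_i)\times(w_k-w_j)=\sum 9t(a-b)\le -9tpq$, while $|(w_j-w_i)\times(e_k-e_j)|\le 3tp$, $|(e_j-e_i)\times(w_k-w_j)|\le 3tq$, and the error-error term is at most $2$; since $9tpq>3tp+3tq+2$ for $p,q\ge 1$, the sign survives. That corrected route would be genuinely different from the paper's proof, which computes no determinants: the paper notes that all difference vectors between points of $C_i$ and $C_{i+1}$ lie in a $2\times 2$ square centered at $v_i$, that the cones $\gamma_i$ spanned by these squares meet only at the origin, and hence the slopes satisfy $m_{i,j}>m_{j,k}$ for $i<j<k$, which forces the right turn; this transfers verbatim to the $D_i$'s precisely because uniform scaling preserves slopes --- the same scale-invariance your proposal overlooks.
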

\begin{proof}
 For $i=0,\dots,t-3$, let $W_i$ be the set of vectors 
 of the form $u:=q-q'$ where $q$ is a point of $C_{i+1}$ and $q'$ is a point
 of $C_{i}$. Note that the endpoints of these vectors lie in a $2\times 2$ square centered at $v_i$; let $\gamma_i$ be
the smallest cone with apex at the origin and that contains $C_i$. By the previous observation the $\gamma_i$
only intersect at the origin; see Figure~\ref{fig:vw}.

\begin{figure}
  \begin{center}
   \includegraphics[width=1.0\textwidth]{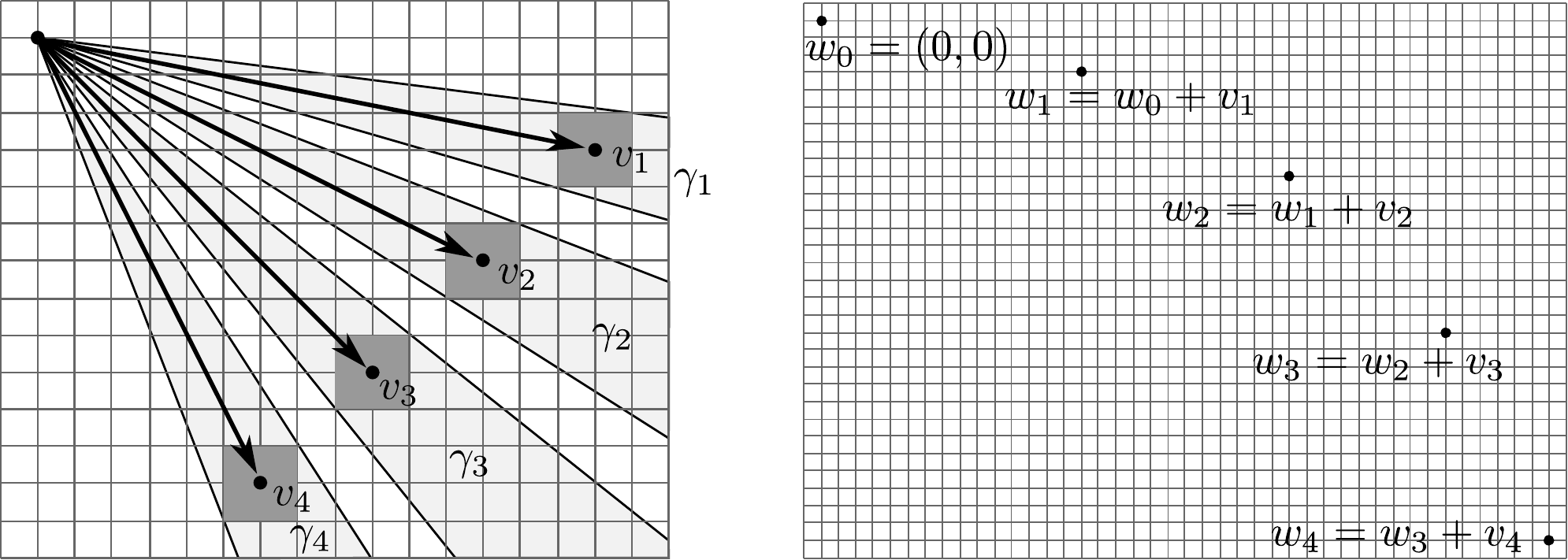}
\end{center}
\caption{The vectors $v_i$, the cones $\gamma_i$ and the points $w_i$, for $t=6$}
\label{fig:vw}
\end{figure}

Let $m_{i-1,i}$ be the slope of a line passing through a point of $C_{i-1}$ and a point of $C_{i}$
and let $m_{i,i+1}$ be the slope of a line passing through a point of $C_{i}$ and a point of $C_{i+1}$.
The vector defining $m_{i-1,i}$ lies in $\gamma_{i-1}$ and the 
 vector defining $m_{i,i+1}$ lies in $\gamma_{i}$.
This implies that $m_{i-1,i} >  m_{i,i+1}$. Let
$0 \le i < j < k \le r-2$ be three integers. Let $m_{i,j}$ be the slope of a line passing through a point of $C_{i}$ and a point of $C_{j}$
and let $m_{j,k}$ be the slope of a line passing through a point of $C_{j}$ and a point of $C_{k}$.
Thus, we have that
\begin{equation}\label{eq:m_i}
m_{i,j} > m_{j,k}.
\end{equation}
Note that (\ref{eq:m_i}) also holds for the lines defined by pairs points in the $D_i$'s.
Therefore, $(p_i, p_j, p_k)$ is a right turn.
\end{proof}

Let $q_i$ be the lower left corner of $D_i$, and let $S_{t-i,i+2}'$ be the translation of $S_{t-i,i+2}$
by $q_i$. That is \[S_{t-i,i+2}':=\{p+q_i:p \in S_{t-i,i+2} \}.\]
The Erd\H{o}s-Szekeres construction is given by \[S_t = \bigcup_{i=0}^{t-2} S_{t-i,i+2}'.\]
Note that \[|S_t|=\sum_{i=0}^{t-2} |S_{t-i,i+2}|=\sum_{j=0}^{t-2} \binom{t-2}{j}=2^{t-2}=n.\]

\begin{prop}
 $S_t$ lies in an integer grid of size in an integer grid of size $O(n^2\log_2(n)^3)$.
\end{prop}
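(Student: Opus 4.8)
The plan is to bound the largest coordinate appearing in $S_t$ by separating two contributions: the spread of the squares $D_i$ that carry the translated blocks, and the extent of each individual block inside its square. Both are governed by the common scaling factor $(t+1)4^{t+1}$.

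The enabling observation is that every block has the same recursion depth. For $S_{t-i,i+2}$ one has $k=t-i$ and $l=i+2$, so $r:=k+l-1=t+1$ independently of $i$. Thus by Proposition~\ref{prop:subset} each $S_{t-i,i+2}$ is a translate of a subset of $P_{t+1}$, and by Proposition~\ref{prop:grid} together with (\ref{eq:X}) and (\ref{eq:Y}) its points are non-negative and lie in a box of width $X_{t+1}=4^{t+1}-1$ and height $Y_{t+1}=(t+1)4^{t+1}$. Since $D_i$ is the unit square $C_i$ scaled by $(t+1)4^{t+1}$, its side length is exactly $(t+1)4^{t+1}=Y_{t+1}$, so each translated block $S_{t-i,i+2}'$ sits inside $D_i$ and hence $S_t\subseteq\bigcup_i D_i$.

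Next I would bound the bounding box of $\bigcup_i D_i$ and convert to asymptotics. Writing $S_{t-i,i+2}'$ as $S_{t-i,i+2}$ translated by $q_i=(t+1)4^{t+1}\,w_i$, every coordinate of $S_t$ is at most $(t+1)4^{t+1}(M+1)$ in absolute value, where $M$ denotes the largest absolute value of a coordinate among the $w_i$; by Lemma~\ref{lem:union_squares} we have $M<3t^2$. This yields a grid of side $O(t^3 4^t)$. Finally, since $n=2^{t-2}$ we have $t=\log_2 n+2$, whence $4^t=16n^2$ and $t^3=O(\log_2(n)^3)$, giving a grid of side $O(n^2\log_2(n)^3)$.

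The one step that needs genuine care, rather than routine bookkeeping, is the snug-fit check in the second paragraph: the factor $(t+1)4^{t+1}$ is chosen precisely so that a block's height $Y_{t+1}$ does not overflow its square $D_i$. Matching this numerology exactly---so that the blocks remain inside the $D_i$ and hence within the computed bounding box---is where I would be most careful; the remaining spread estimate and the final asymptotic substitution are direct applications of the earlier lemmas.
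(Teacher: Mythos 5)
Your proposal is correct and follows essentially the same route as the paper's proof: bound the spread of the squares via Lemma~\ref{lem:union_squares}, multiply by the scaling factor $(t+1)4^{t+1}$ to get a grid of side $3t^2(t+1)4^{t+1}$, and substitute $t=\log_2(n)+2$ to obtain $O(n^2\log_2(n)^3)$. Your explicit snug-fit verification---that $r=(t-i)+(i+2)-1=t+1$ for every block, so each $S_{t-i,i+2}'$ has width $X_{t+1}=4^{t+1}-1$ and height $Y_{t+1}=(t+1)4^{t+1}$ and thus fits inside its square $D_i$---is a detail the paper leaves implicit, and a worthwhile addition rather than a different approach.
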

\begin{proof}
 Recall that $D_i$ is a scaling of $C_i$ by a factor of $(t+1)4^{t+1}$.
 Therefore, by Lemma~\ref{lem:union_squares}, $S_t$ lies in an integer
 grid of size   \[ 3t^2(t+1) 4^{t+1} = 192n^2\log_2(4n)^2 \log_2(8n)=O(n^2 \log_2(n)^3).\] 
\end{proof}

\begin{prop}
 $S_t$ is in general position.
\end{prop}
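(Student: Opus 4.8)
The plan is to take an arbitrary triple of points of $S_t$ and show it is not collinear, organizing the argument by how the three points are distributed among the squares $D_0,\dots,D_{t-2}$. There are three cases: the three points lie in three distinct squares, in exactly two squares, or all in a single square. The first and third cases are handled by results already available, and the only genuine work is the middle case.

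If the three points lie in three distinct squares, then by Lemma~\ref{lem:turn} they form a right turn and hence are not collinear. If all three lie in a single square $D_i$, they belong to $S_{t-i,i+2}'$, a translate of $S_{t-i,i+2}$, which by Proposition~\ref{prop:subset} is a subset of $P_{t+1}$ (here $r=(t-i)+(i+2)-1=t+1$); since $P_{t+1}$ is in general position no three of them are collinear. The general position of $P_r$ itself follows by induction from Lemma~\ref{lem:high_above}: in a triple split across the ``high above'' partition $P_r=L_r\cup R_r$, the line through the two points on one side cannot pass through the lone point on the other side, while triples contained in $L_r$ or in $R_r$ are handled by the inductive hypothesis.

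The main case, and the only real obstacle, is when exactly two of the points, say $p$ and $p'$, lie in one square $D_i$ while the third point $q$ lies in a different square $D_j$. Here I would separate intra-square from inter-square slopes by means of a slope dichotomy: every line through two points of $P_r$ has slope at least $4/3$, whereas every line through a point of $D_i$ and a point of a different square $D_j$ has slope less than $1$. Granting this, the line through $p$ and $p'$ is an intra-square line whose slope is preserved under the translation defining $S_{t-i,i+2}'$, so its slope is at least $4/3>1$; the line through $p$ and $q$ is inter-square, so its slope is less than $1$. These two slopes cannot coincide, so $p,p',q$ are not collinear.

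It remains to establish the two halves of the dichotomy, and the intra-square lower bound is where the computation concentrates. For it I would induct on $r$: the minimum slope of $P_r$ is the minimum of the minimum slopes of $L_r$ and $R_r$ (both translates of $P_{r-1}$, hence equal to that of $P_{r-1}$) and of the cross-slopes between $L_r$ and $R_r$; the base case $P_1$ gives slope $4/3$, and every cross-slope is at least $(\delta_r'-Y_{r-1})/X_r=(2r+2)4^{r-1}/(4^r-1)>4/3$, so the minimum never drops below $4/3$. The inter-square upper bound is essentially contained in the proof of Lemma~\ref{lem:turn}: slopes are unchanged when each $C_i$ is scaled to $D_i$, the horizontal gap between any two squares is several times the side length of a square while the vertical gap is comparatively small, so the steepest inter-square direction, the one between $C_0$ and $C_1$ (lying in the $2\times2$ square centred at $v_0=(3t,0)$), has slope at most $1/(3t-1)<1$, and all other inter-square directions have smaller, indeed non-positive, slope. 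Thus the hard part is the intra-square bound of $4/3$, which the recursion above makes routine.
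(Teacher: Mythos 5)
Your proof is correct, and its skeleton---the three-way case split according to how many squares $D_i$ contain the triple, with Lemma~\ref{lem:turn} handling three distinct squares and general position of $S_{t-i,i+2}$ (via its embedding in $P_{t+1}$) handling a single square---is exactly the paper's. The only divergence is the slope dichotomy in the two-square case. The paper disposes of it in one line: slopes between distinct $D_i$'s are \emph{negative}, while slopes inside a copy of $S_{k,l}$ are greater than or equal to zero. You instead prove the stronger intra-square lower bound of $4/3$ (by induction on $P_r$, using $\delta_r'-Y_{r-1}=(2r+2)4^{r-1}$ for the cross-slopes) against the weaker inter-square upper bound of $1$. This costs you an extra lemma the paper never needs, but it buys robustness to an off-by-one in the paper's definitions: as literally written, $v_i$ is defined for $1\le i\le t-2$ yet the recursion $w_{i+1}=w_i+v_i$ starts at $i=0$, which would force $v_0=(3t,0)$; then difference vectors between $C_0$ and $C_1$ lie in a $2\times 2$ square centered at $(3t,0)$ and can have slope up to $1/(3t-1)>0$, so the ``negative slope'' claim fails for that pair, whereas your $<1$ versus $\ge 4/3$ separation still goes through. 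Under the evidently intended indexing ($w_i=w_{i-1}+v_i$, so every displacement has slope $-i/(t-i)<0$), the paper's simpler sign argument suffices and your $4/3$ computation is unnecessary work. Two small notes: your cross-slope bound $(2r+2)4^{r-1}/(4^r-1)$ \emph{equals} $4/3$ at $r=1$ rather than exceeding it (harmless, since $4/3$ is the claimed minimum, attained in $P_1$), and your explicit derivation of general position of $P_r$ from Lemma~\ref{lem:high_above} fills in a step the paper leaves implicit when it invokes general position of $S_{k,l}$.
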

\begin{proof}
 Let $p_1, p_2, p_3$  be three points of $S_t$. If these three points
 are contained in a same $D_i$, then they are not collinear since $S_{k,l}$ is in general
 position. If the three of them are in different $D_i$, then by Lemma~\ref{lem:turn}  they are not collinear. If two of them lie on a same $D_i$ and one of them
 in some $D_j$, then they are not collinear since the slope of a line joining a point in $D_i$ and a 
 point in $D_j$ is negative, while the slope of a line defined by two points in $S_{k,l}$
 is greater or equal to zero. Therefore, $S_t$ does not contain three collinear points.
\end{proof}

\begin{theorem}
 Every convex $k$-gon of $S_t$ has at most $t-1$ vertices.
\end{theorem}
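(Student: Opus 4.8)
The plan is to cut an arbitrary convex polygon $Q$ of $S_t$ into its upper hull and lower hull and to count, block by block, how many vertices each hull can collect. Two facts about slopes drive the whole argument. First, a line through two points of the same block $D_i$ has slope $\ge 0$ (this is exactly the observation used in the proof that $S_t$ is in general position), whereas a line through points of two distinct blocks has slope $<0$; moreover, by Lemma~\ref{lem:turn} (inequality (\ref{eq:m_i})) these negative ``between-block'' slopes strictly decrease as one moves to the right. Second, because the squares $D_i$ have pairwise disjoint and index-ordered $x$-ranges, the leftmost vertex $L$ of $Q$ lies in the used block of smallest index, say $D_a$, and the rightmost vertex $R$ lies in the used block of largest index, say $D_z$.

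I would first treat the upper hull, along which the edge slopes strictly decrease from left to right. Classifying each edge as within-block (slope $\ge 0$) or between-block (slope $<0$), monotonicity forces every within-block edge to precede every between-block edge; since moving from one block to the next requires a between-block edge, all within-block edges of the upper hull must lie in a single block, which can only be the leftmost used block $D_a$. Thus $D_a$ contributes to the upper hull a cap, and every other used block contributes exactly one vertex. As $S_{t-a,a+2}$ has no $(a+2)$-cap, this cap has at most $a+1$ vertices, so the upper hull has at most $(a+1)+(s-1)$ vertices, where $s$ is the number of used blocks.

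The lower hull is handled by the mirror argument: its slopes strictly increase, so the between-block edges (whose slopes decrease to the right) can appear at most once, and all within-block edges of the lower hull belong to the rightmost used block $D_z$. Hence the lower hull consists of $L$, a single between-block edge jumping directly to $D_z$, and a cup inside $D_z$; since $S_{t-z,z+2}$ has no $(t-z)$-cup, that cup has at most $t-z-1$ vertices. I expect this structural dichotomy---showing that only the extreme blocks $D_a$ and $D_z$ can contribute a nontrivial chain, while every middle block contributes a single upper-hull vertex and nothing to the lower hull---to be the main obstacle, so I would take care with the boundary edges and with the two shared vertices $L$ and $R$, which lie on both hulls.

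It remains to add up. Counting all upper-hull vertices and then only the lower-hull vertices not already counted (the cup in $D_z$ minus its right endpoint $R$, which is the single upper-hull vertex of $D_z$, while its left endpoint $L$ already lies in the cap of $D_a$), I obtain
\[
|Q| \le (a+1)+(s-1)+(t-z-2).
\]
Since the used blocks have indices in $\{a,\dots,z\}$ we have $s\le z-a+1$, and substituting gives $|Q|\le t-1$. Finally I would note the degenerate cases: a polygon inside a single block ($s=1$) has at most $t-2$ vertices, and the one-point blocks $D_0=S_{t,2}'$ and $D_{t-2}=S_{2,t}'$ are consistent with the cap/cup bounds, so the estimate $|Q|\le t-1$ holds in all cases.
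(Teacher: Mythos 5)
Your proposal is correct and follows essentially the same route as the paper's proof: split the polygon into upper and lower chains, show the middle blocks contribute at most one (upper-hull) vertex each, bound the vertices in the leftmost used block by a cap in $S_{t-a,a+2}$ and in the rightmost by a cup in $S_{t-z,z+2}$, and add up to $t-1$. Your derivation of the lower hull's single-jump structure from the slope monotonicity $m_{i,j}>m_{j,k}$ is just an unpacked form of the paper's appeal to Lemma~\ref{lem:turn}, and your count over used blocks ($s\le z-a+1$) matches the paper's count over the indices between $s$ and $r$, so the two arguments coincide in substance.
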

\begin{proof}
 Let $P$ be a convex $k$-gon of $S_t$. Let $U$ and $L$ be the upper and lower
 polygonal chains of $P$, respectively. Let $s$ be the index such that the 
 leftmost point of $U$ (and $L$) is in $D_s$, and let $r$ be the index
 such that the rightmost point of $U$ (and $L$) is in $D_r$.
 
 Note that for all $0 \le i < j \le t-2$,
 the slopes of an edge joining a point of $D_{i}$ with a point of $D_{j}$ are negative;
 since the slope of an edge defined by a pair of points in  $S_{t-i,i+2}'$ is
 greater or equal to zero, neither $U$ nor $L$ contain two consecutive vertices in $D_i$ for
 $s < i <r$. By Lemma~\ref{lem:turn} such a $D_i$ cannot contain a vertex of
 $L$. Therefore, $P$ contains at most $r-s-1$ vertices not in $D_s \cup D_r$.
 
 The vertices of $P$ contained in $S_s$ must form a cap and thus consists of at most $s+1$ vertices. 
 Similarly, the vertices of $P$ contained in $S_r$ must form a cup and therefore consists of at most 
 $t-r-1$ vertices. Therefore, $P$ has at most $(r-s-1)+(s+1)+(t-r-1)=t-1$ vertices; the result follows.
\end{proof}

To summarize, we have the following. 
\begin{theorem}\label{thm:es_size}
 The Erd\H{o}s-Szekeres construction of $n=2^{t-2}$ points can be realized in an integer grid of size $O(n^2\log_2(n)^3)$.
\end{theorem}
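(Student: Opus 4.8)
The plan is to recognize that Theorem~\ref{thm:es_size} is the culmination of this section and restates the main result, so its proof amounts to collecting the facts already established rather than proving anything new. First I would recall that the construction $S_t=\bigcup_{i=0}^{t-2} S_{t-i,i+2}'$ consists of exactly $n=2^{t-2}$ points, as verified by the binomial identity $\sum_{j=0}^{t-2}\binom{t-2}{j}=2^{t-2}$. This fixes the point count and the relationship $t-1=\log_2(n)+1$ needed to match the bound in the $1961$ Erd\H{o}s--Szekeres construction.

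Next I would invoke the three statements proven immediately above: that $S_t$ is in general position; that every convex polygon with vertices in $S_t$ has at most $t-1$ vertices; and that $S_t$ fits in an integer grid of size $3t^2(t+1)4^{t+1}=O(n^2\log_2(n)^3)$. Taken together, these three assertions are precisely the claim of the theorem, so the proof closes by citing them in sequence.

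The real content lies not in this final assembly but in the lemmas that precede it, and I expect those to be where the difficulty concentrates. The principal obstacle is Lemma~\ref{lem:high_above}, which shows that in the recursively built superset $P_r$ the right half $R_r$ is high above the left half $L_r$; this is exactly what allows Proposition~\ref{prop:subset} to embed $S_{k,l}$ inside $P_{k+l-1}$ while keeping coordinates polynomial (Proposition~\ref{prop:grid}), in contrast to the exponential blowup of the naive recursion. The second crucial ingredient is Lemma~\ref{lem:turn}, which guarantees that any three points drawn from distinct scaled blocks $D_i,D_j,D_k$ form a right turn, so that the blocks collectively behave like a single concave chain. With the per-block cup/cap bounds inherited from the $S_{k,l}$ construction combined with this inter-block right-turn structure, the convex-polygon count of at most $t-1$ vertices follows, while the grid size is controlled by the modest $O(t^2)$ spread of the block centers (Lemma~\ref{lem:union_squares}) multiplied by the $O(4^t)$ scaling factor. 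Once these lemmas are in hand, Theorem~\ref{thm:es_size} is immediate.
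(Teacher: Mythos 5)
Your proposal is correct and takes essentially the same approach as the paper: Theorem~\ref{thm:es_size} is stated there as a summary (``To summarize, we have the following''), its content being exactly the conjunction of the point count $|S_t|=\sum_{j=0}^{t-2}\binom{t-2}{j}=2^{t-2}=n$, the general-position proposition, the bound of $t-1$ vertices on every convex polygon of $S_t$, and the grid-size bound $3t^2(t+1)4^{t+1}=O(n^2\log_2(n)^3)$ from Lemma~\ref{lem:union_squares} and the scaling of the $D_i$. Your identification of Lemma~\ref{lem:high_above} and Lemma~\ref{lem:turn} as the load-bearing ingredients also matches the paper's structure.
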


\section{Implementation}\label{sec:imp}

A direct implementation of  Section~\ref{sec:cons_small}
gives way to an efficient algorithm to compute the Erd\H{o}s-Szekeres construction.
By Proposition~\ref{thm:es_size}, the size of the grid needed by this algorithm is asymptotically
small; however, there are large constants hidden in such an implementation.
 In this section we mention some optimizations we have done to further
reduce the size of the integer grid needed for the Erd\H{o}s-Szekeres construction.

\begin{itemize}

\item \textbf{Decrease the horizontal distance between left and right parts of $S_{k,l}$.}

In Section~\ref{sec:basic}, to construct $P_r$, we gave explicit values to $\delta_r$ and $\delta_r'$. This allowed us
to show that $L_r$ is to the left of $R_r$ and that $R_r$ is high above $L_r$. However, it is 
enough to show that $L_r$ is to the left of $R_r$ and that the rightmost point of $R_r$ is above
$\ell_r$. 
That is that 
\begin{equation}\label{eq:rec}
 Y_r > \frac{Y_r-Y_{r-2}}{X_r-X_{r-2}} \left( X_r-X_{r-2} \right)+Y_{r-2}.
\end{equation}
Let $c>0$ be a constant and set $X_r=(2+c)^r$. It can be shown
that if we replace inequality (\ref{eq:rec}) by an equality and solve
for $Y_r$, then $Y_r$ is of order $O\left ( \left (2+\frac{3}{c}\right )^r \right )$.
Therefore, if we set $c=\sqrt{3}$, then both $X_r$ and $Y_r$ are of order 
$O \left  ( \left ( 2+\sqrt{3} \right )^r \right)$. In the actual implementation we
choose $\delta_r$ so that \[X_r = \left \lceil \left (2+\sqrt{3} \right )^r \right \rceil. \]
Then we choose $\delta_r'$ so that \[Y_r =\left \lceil \frac{Y_r-Y_{r-2}}{X_r-X_{r-2}} \left( X_r-X_{r-2} \right)+Y_{r-2}+1\right \rceil.\]
The addition of the ceiling functions has prevented us from proving that $Y_r$ is of order
$O\left ( \left (2+\frac{3}{c}\right )^r \right )$. If this is the case then $P_r$ can be realized 
in an integer grid of size $O\left (n^{\log_2(2+\sqrt{3})}\right )=O \left (n^{1.8999\dots} \right )$.
In Section~\ref{sec:basic}, we opted to avoid
using ceiling functions at the expense of being able to show a slightly worse upper bound.

Inspired by this, we do likewise when constructing $S_{k,l}$. First we construct $S_{k-1,l}$ and  $S_{k,l-1}$.
Let $X_{k,l}$ be the horizontal length of $S_{k,l}$. 
We choose \[ \delta_{k,l} := \left \lceil  \left (1+\sqrt{3} \right ) \left ( \frac{X_{k-1,l}+X_{k,l-1}}{2} \right )\right \rceil .\]

For any two positive integers $k$ and $l$, let $\ell_{k,l}$ be the straight line passing through the rightmost point of $S_{k-1,l}$
and the leftmost point of the copy of $S_{k,l-1}$ in $S_{k,l}$. (This definition is similar to the definition of $\ell_r$ for $P_r$.)
We choose $\delta_{k,l}'$ so that; the rightmost point in the translation of $S_{k,l-1}$ is above
$\ell_{k-1,l}$; and the leftmost point of $S_{k-1,l}$ is below the corresponding
translation of $\ell_{k,l-1}$ in $R_{k,l}$.

\item \textbf{Separate the left and right parts of $S_{k,l}$ by one in the last step of the recursion.}

The reason for choosing a relatively large horizontal separation between the left and right 
parts of $S_{k,l}$ is so that the slope of $\ell_{k,}$ does not increase too quickly. We do not need
to do this in the last step of the construction.
At each step in the construction of $S_{i,j}$ for $2 \le i \le k$, $2 \le j \le k$ and $i+j<k$,
we separate the corresponding left and right parts as before. In the last step, when constructing
$S_{k,l}$, we separate $S_{k-1,l}$ from the copy of $S_{k,l-1}$ by one.

\item \textbf{Decrease the size of the squares (rectangles) $D_i$.}

In Section~\ref{sec:cons_small}, for $i=0,\dots,t-2$ we defined a square $D_i$,
inside which we placed a copy of $S_{t-i,i+2}$. $D_i$ was chosen large
enough so that $P_r$ fits inside $D_i$ for $r=t+1$. Since we only need to fit $S_{t-i,i+2}$
we replace $D_i$ by a rectangle of length $X_{t-1,i+2}$ and height $Y_{t-i,i+2}$. 
The definitions of the $v_i$'s and $w_i$'s are changed accordingly.
\end{itemize}

In Figure~\ref{fig:S55} we show $S_{5,5}$ in $55 \times 109$ integer grid; in Figure~\ref{fig:S_6}
we show $S_{6}$ in a $58 \times 62$ integer grid; and in Figure~\ref{fig:S_7} we show
$S_{7}$ in a $230 \times 310$ integer grid. For a comparison, we note that 
Kalbfleisch and Stanton~\cite{kal} realize $S_6$
in a $6970 \times 1828$ integer grid.

Our implementations are freely available as part of \emph{``Python's Discrete and Combinatorial Geometry Library''}
at \texttt{www.pydcg.org}. They are included in the \emph{points} module. In that module the following
constructions are also included: Convex Position and the Double Circle as described in \cite{double_circle};
the Horton Set as described in~\cite{horton_us} and the Squared Horton set (this set was first defined by Valtr
in~\cite{restricted_valtr}).

\begin{figure}
  \begin{center}
   \includegraphics[width=0.8\textwidth]{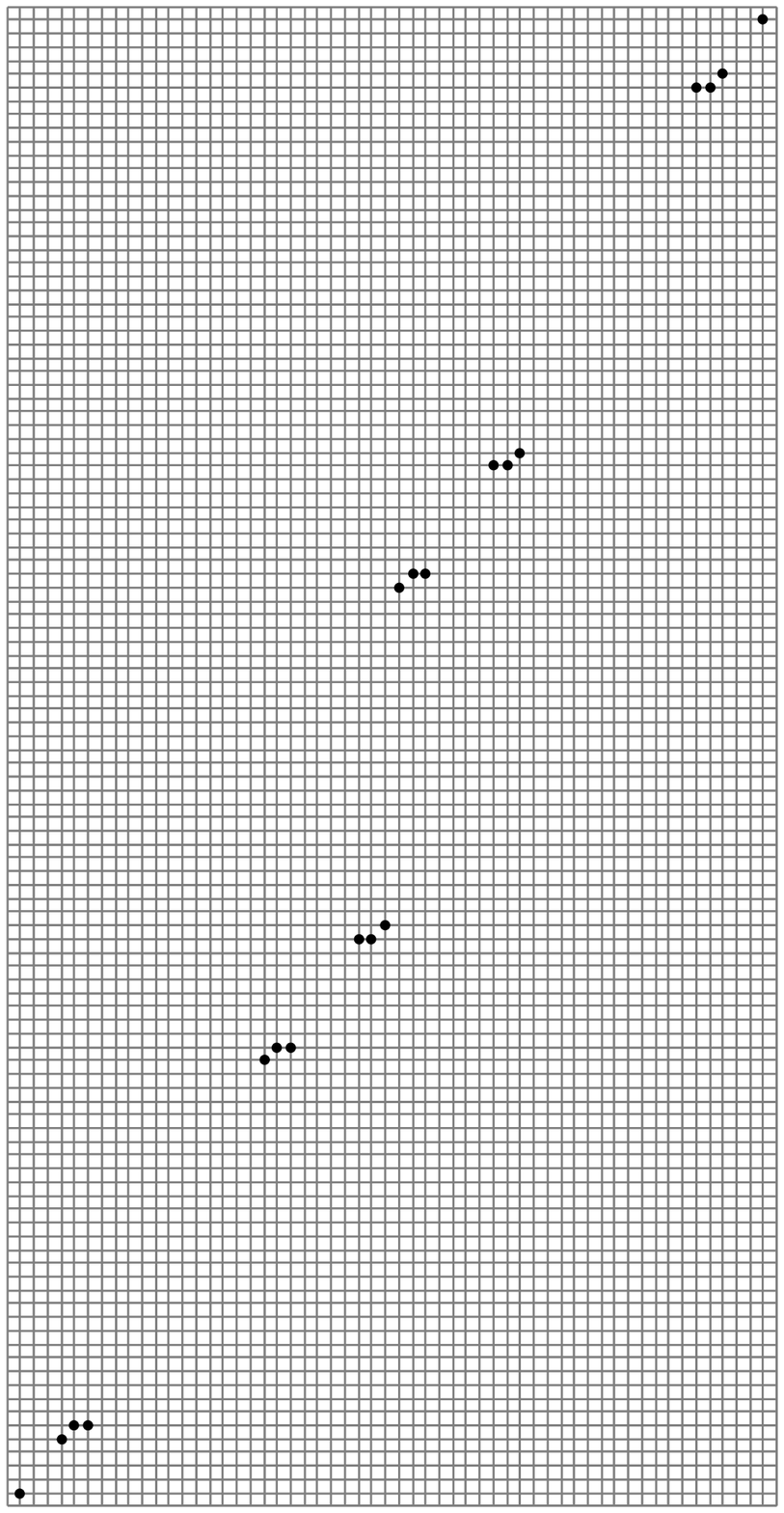}
\end{center}
\caption{$S_{5,5}$ }
\label{fig:S55}

\end{figure}
\begin{figure}
  \begin{center}
   \includegraphics[width=0.85\textwidth]{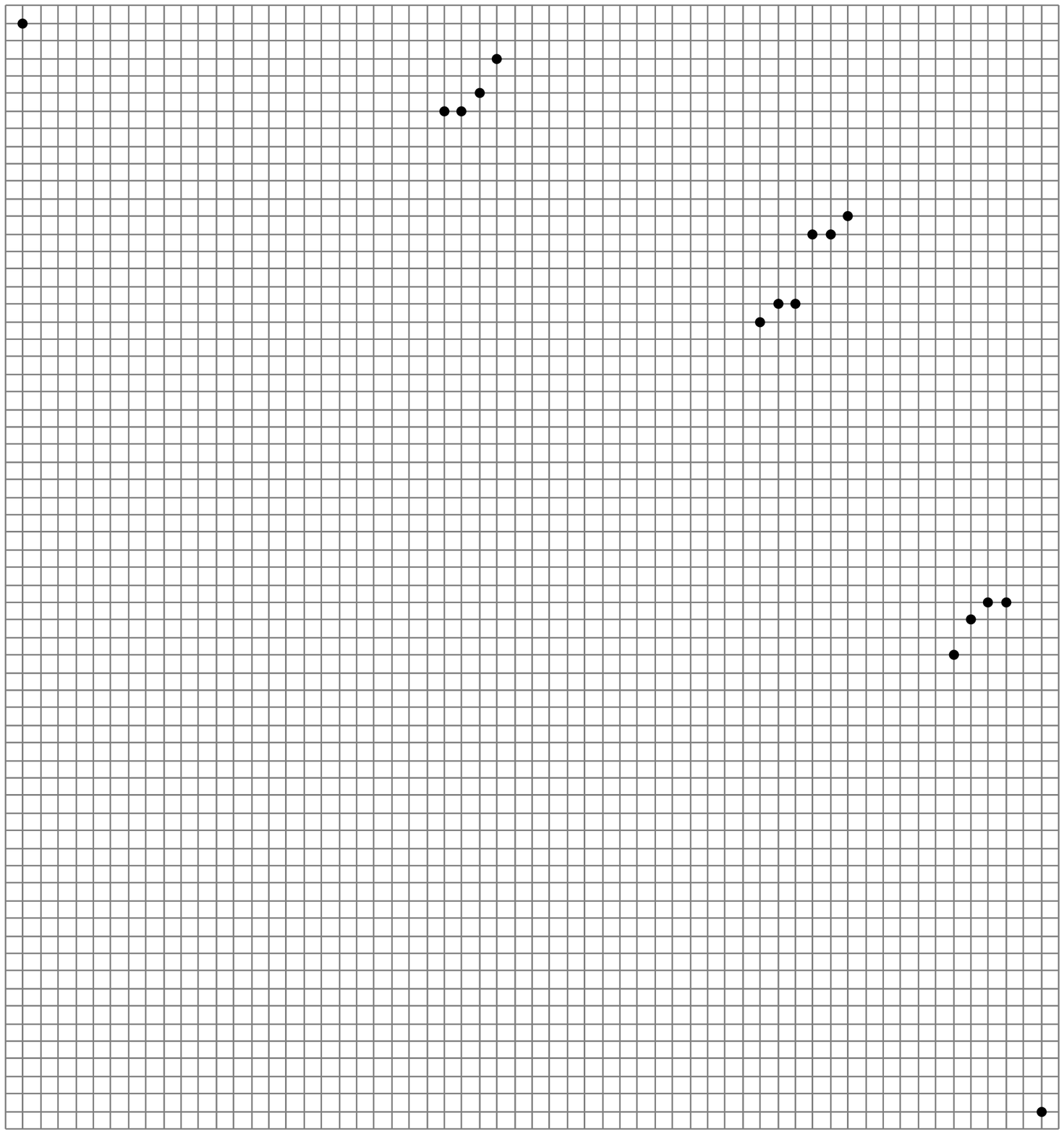}
\end{center}
\caption{$S_6$ }
\label{fig:S_6}
\end{figure}

\begin{figure}
  \begin{center}
   \includegraphics[width=1.0\textwidth]{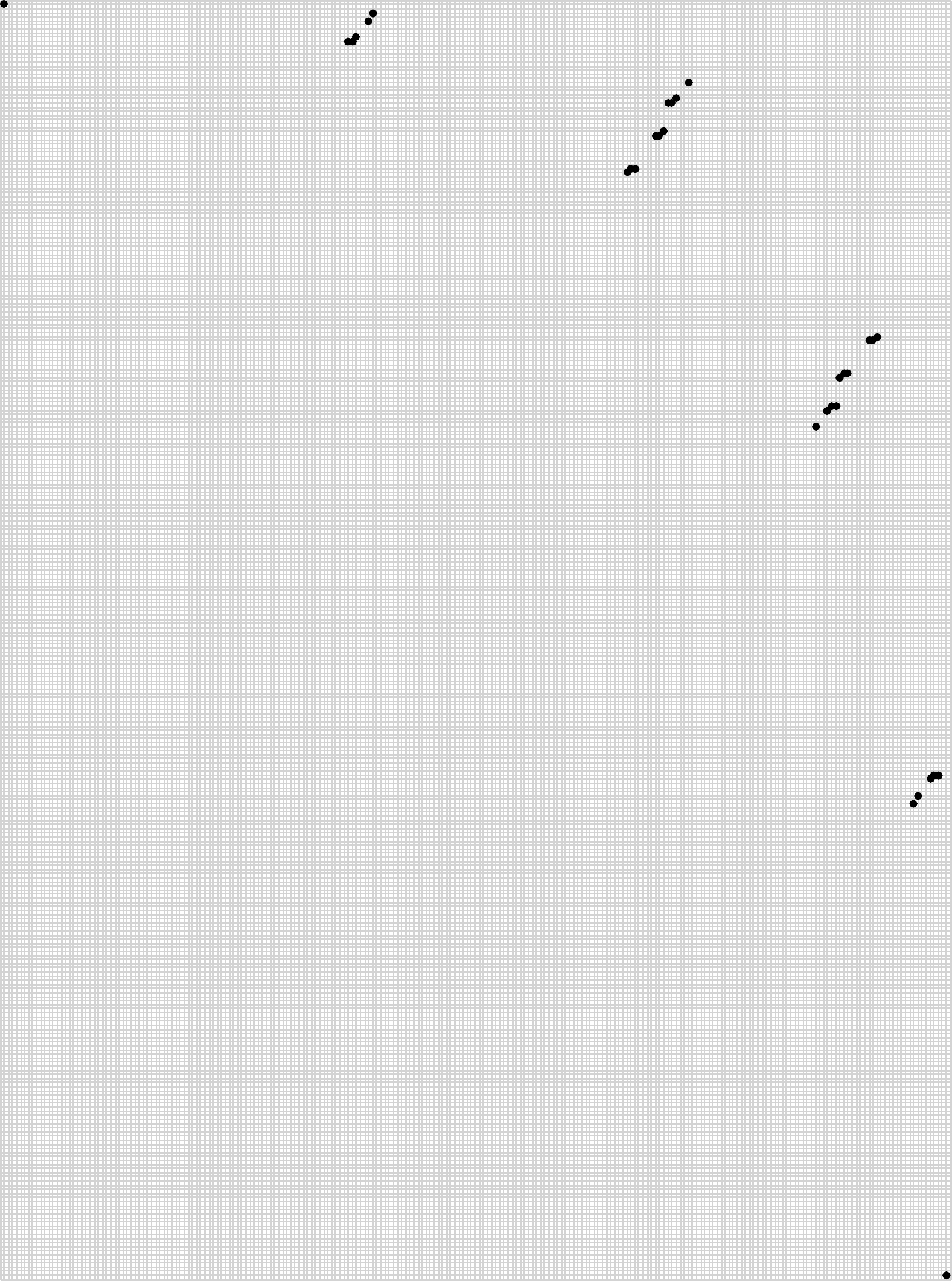}
\end{center}
\caption{$S_7$ }
\label{fig:S_7}
\end{figure}

%%%

%%%%
\newpage
\section{Erd\H{o}s-Szekeres Type Problems in Restricted Planar Point Sets}\label{sec:restricted}
In this section we propose some open Erd\H{o}s-Szekeres type problems on point sets in integer grids.
Let $\diam(S)$ be the maximum distance between a pair of points of $S$, and 
let $\mindist(S)$ be the minimum distance between a pair of points of $S$.  Alon, Katchalski and  Pulleyblank~\cite{restricted},
showed that if for some constant $\alpha >0$, $S$ satisfies 
\[ \frac{\diam(S)}{\mindist(S)} \le \alpha n^{\frac{1}{2}},\]
then $S$ contains a convex $k$-gon of $\Omega \left ( n^{\frac{1}{4}} \right )$ vertices;
in ~\cite{restricted_valtr}, Valtr improved this bound to $\Omega \left ( n^{\frac{1}{3}} \right )$. He also 
showed that if for some constant $\alpha >0$, $S$ satisfies 
\[ \frac{\diam(S)}{\mindist(S)} \le \alpha \sqrt{n},\]
then $S$ contains a convex $k$-gon of $\Omega \left (n^{\frac{1}{3}} \right )$ vertices. That is, metric restrictions
on $S$ may force large convex polygons. 

This prompted the following two problems in \cite{research_problems}.
\begin{problem}\label{prob:ratio}
 Does there exist, for every $\beta \ge 1$, a suitable constant $\varepsilon(\beta)>0$ with the following 
 property: any set of $S$ of $n$ points in general position in the plane with $\frac{\diam(S)}{\mindist(S)} < n^\beta$
 contains a convex $n^{\varepsilon(\beta)}$-gon?
\end{problem}
\begin{problem}\label{prob:int}
 Does there exist, for every $\gamma \ge 1$, a suitable constant $\varepsilon(\gamma)>0$ with the following property:
 any set of $n$ points in the general position in the plane with positive integer coordinates that do not exceed
 $n^{\gamma}$ contains a convex $n^{\varepsilon(\gamma)}$-gon?
\end{problem}

Valtr in his PhD thesis~\cite{pavel_thesis} showed that the answer for Problem~\ref{prob:ratio} is \textbf{``yes''} for $\beta <1$.
He also noted, in passing, in page 55 of his thesis the following. 
\begin{quote}
 ``If $\tau=1/2$ then it is possible to construct an $\left (n^{\tau}=\sqrt{n} \right)$-dense set of size
$n$ which contains no more than $O(\log n)$ vertices of a convex polygon. Such a set can be obtained by 
an affine transformation from the construction of Erd\H{o}s and Szekeres~\cite{erdos_cons}...''
\end{quote}
$S$ is said to be \emph{$\alpha$-dense} if $\frac{\diam(S)}{\mindist(S)} \le \alpha \sqrt{n}$. So this observation
solves Problem~\ref{prob:ratio} for $\beta \ge 1$ as well. Problem~\ref{prob:int} appeared first
in Valtr's thesis (Problem~10), where it is attributed to Welzl. 

In this paper we have shown
that the answer to Problem~\ref{prob:int} is \textbf{``No''} for all $\gamma > 2$.
We conjecture that the answer  to Problem~\ref{prob:int} is \textbf{``No''}
for all $\gamma \ge 1$. However, we conjecture that there exists a $\gamma \in (1,2)$
such that the Erd\H{o}s-Szekeres construction cannot be realized in an $n^\gamma \times
n^\gamma$ integer grid. We propose the following alternative to Problem~\ref{prob:int}.

\begin{problem}
  Does there exist, for every $\gamma \ge 1$ and every $n >0$, a suitable constant $\varepsilon(\gamma)>0$ and a set $S$
  of $n$ points in general position in the plane
  with the following property?
 $S$ has positive integer coordinates not exceeding $n^\gamma$, and $S$ does not contain a convex $\varepsilon(\gamma)\log_2(n)$-gon.
\end{problem}

\subsection{Empty $k$-gons}

A convex $k$-gon of $S$ is \emph{empty} if it does not contain a point of $S$ in its interior.
In 1978, Erd\H{o}s~\cite{somemore} asked whether an analogue of the Erd\H{o}s-Szekeres theorem
holds for empty convex $k$-gons. That is, if for every $k$, every sufficiently large point set in general
position in the plane contains an empty $k$-gon. 

Every set of at least three points contains an empty triangle; Esther Klein~\cite{happyend} proved that every set of five
points contains an empty convex $4$-gon; Harborth~\cite{harborth} proved that every set
of 10 points contains an empty convex $5$-gon; and Horton~\cite{horton} constructed arbitrarily large point
sets without empty convex $7$-gons. (His construction is now known as the \emph{Horton Set}.)
The question for convex $6$-gons remained open for more than a quarter of a century
until Nicol\'as~\cite{nicolas} and independently Gerken~\cite{gerken} showed that every sufficiently large set of points
contains a convex empty $6$-gon.

Alon et al. posed a problem in~\cite{restricted}, similar to Problem~\ref{prob:ratio}, but for
empty convex $k$-gons. They asked whether if for some constant $\alpha >0$ every
sufficiently large $\alpha$-dense set of points contains an empty convex $7$-gon.
Valtr in \cite{restricted_valtr} showed that there exist arbitrarily large $\sqrt{2\sqrt{3}/\pi}$-dense point
sets not containing an empty convex $7$-gon. His construction is based on the Horton set.

The same question can be asked for point sets in an integer grid. 
\begin{problem}\label{prob:int_empty}
 Does the following hold for every constant $\gamma \ge 0$? 
 Every sufficiently large set of $n$ points in the general position in the plane with positive integer coordinates that do not exceed
 $n^{\gamma}$ contains an empty convex $7$-gon.
\end{problem}
As far as we know all constructions without an empty convex $7$-gons are based on the Horton set.
This is particularly relevant for Problem~\ref{prob:int_empty} for the following reason.
In \cite{horton_us}, Barba, Duque, Fabila-Monroy and Hidalgo-Toscano proved that the Horton set
cannot be realized in an integer grid of polynomial size.

\subsection*{Acknowledgments} 

We thank Luis Felipe Barba for providing us with a copy of \cite{kal}.

\small
\bibliographystyle{abbrv} \bibliography{small}

\end{document}